\newcommand{\eChar}{\begin{enumerate}[(i)]}
\newcommand{\eCharR}{\begin{enumerate}[(a)]}
\newcommand{\eBr}{\begin{enumerate}[(1)]}
\newcommand{\Abstract}
\title
{
Edge-connectivity and non-negative Lin--Lu--Yau curvature
}
\author{Shiping Liu}
\address{
School of Mathematical Sciences\\
University of Science and Technology of China\\
96 Jinzhai Road\\
Hefei 230026\\
Anhui Province\\
China}
\email{spliu@ustc.edu.cn}
\author{Qing Xia}
\address{
School of Mathematical Sciences\\
University of Science and Technology of China\\
96 Jinzhai Road\\
Hefei 230026\\
Anhui Province\\
China}
\email{xq0420@mail.ustc.edu.cn}
\date{\today}
\theoremstyle{plain}
\newtheorem{lemma}{Lemma}[section]
\newtheorem{theorem}[lemma]{Theorem}
\newtheorem{corollary}[lemma]{Corollary}
\theoremstyle{definition}
\newtheorem{claim}[lemma]{Claim}
\newtheorem{definition}[lemma]{Definition}
\newtheorem{remark}[lemma]{Remark}
\numberwithin{equation}{section}
\begin{document}

\pagestyle{plain}

\begin{abstract}We determine the edge-connectivity of all connected graphs with non-negative Lin--Lu--Yau curvature. As a consequence, we prove that the edge-connectivity of a \emph{finite} connected graph with non-negative Lin--Lu--Yau curvature is equal to its minimum degree. This answers an open question of Chen, the first named author and You. We actually classify all connected graphs with non-negative Lin--Lu--Yau curvature and edge-connectivity smaller than their minimum degree. They are all infinite graphs with two ends.
\end{abstract}
\keywords{Wasserstein distance, Lin--Lu--Yau curvature, rigidity, edge-connectivity}
\maketitle


\section{Introduction and statement of results}

The interplay between local and global properties of spaces has long been a central theme in the study of both geometry and graph theory. The local properties of spaces are often described by curvature bounds in geometry \cite{Jost} and by local graphs in graph theory \cite{Hall80, Weetman94a, Weetman94b}. With various synthetic notions of discrete curvature, the approaches of the two disciplines have interacted quite deeply \cite{NR17}.  In this paper, we explore connections between local properties as captured by the Lin--Lu--Yau curvature and global properties---edge-connectivity and finiteness---of locally finite graphs. 

The edge-connectivity $k'(G)$ of a locally finite graph $G$ with at least two vertices is the minimum cardinality of an edge cut of $G$. Here, an edge cut is a set of edges whose deletion increases the number of connected components. We call an edge cut with the minimum cardinality a min-cut of $G$. The edge-connectivity of a single vertex is defined to be 0. 
(Notice that the notation $k(G)$ is reserved for the vertex-connectivity of a graph $G$.) By definition, we have
\[k'(G) \leq \delta(G),\]
where $\delta(G)$ stands for the minimum vertex degree of $G$. This estimate is sharp. In fact, it has been shown that the equality holds for any finite connected edge-transitive graphs \cite{Mad71,Wat70}, vertex-transitive graphs \cite{Mad71}, and distance-regular graphs \cite{BrHa05, BrMe85}. By a recent result \cite{CKL25}, the equality still holds for any (possibly infinite) connected amply regular graphs, for which any two vertices at distance $2$ have more than one common neighbor. On the other hand, the gap between the edge-connectivity and minimum vertex degree can be arbitrarily large. Indeed, for any integers $0<\ell\leq d$, there exists a graph $G$ with $k'(G)=\ell$ and $\delta(G)=d$.

The Lin--Lu--Yau curvature is a discrete analogue of the Ricci curvature in Riemannian geometry. It was proposed by Lin, Lu and Yau \cite{LLY11} as a modification of Ollivier's notion of coarse Ricci curvature \cite{Ollivier09}. 
The Lin--Lu--Yau curvature of an edge $x\sim y\in E$ is defined via comparing the two neighborhoods around $x$ and $y$ in terms of Wasserstein distance. Intuitively, the curvature along an edge $x\sim y\in E$ is positive (resp., non-negative) when the Wasserstein distance between the two neighborhoods is strictly less than (resp., no larger than) the combinatorial distance between $x$ and $y$. A locally finite graph $G$ is said to have positive (resp., non-negative) Lin--Lu--Yau curvature if every edge in $G$ has positive (resp., non-negative) Lin--Lu--Yau curvature. Moreover, a graph has positive (resp. non-negative) Lin--Lu--Yau curvature precisely when it has positive (resp. non-negative) $\frac{1}{2}$-Ollivier Ricci curvature, see \eqref{eq:K_LLY} below. There is a large body of literature on various properties of graphs under Ollivier or Lin--Lu--Yau curvature bounds, see, e.g., \cite{BJL12,BM15,BCLMP18,Cush20,HM25,HLX24,JL14,LY10,LW20,MW19,Button3,Salez22,Smith14}.

The interaction between vertex- and edge-connectivity and Lin--Lu--Yau curvature of connected graphs has been explored by Chen, the first named author and You \cite{CLY25}. In particular, they establish the following relationship between the Lin--Lu--Yau curvature and edge-connectivity. Notice that all graphs in this paper are locally finite and simple. 
\begin{theorem}\cite{CLY25}\label{theorem:positive Lin--Lu--Yau curvature and edge-connectivity} Let $G$ be a connected graph with minimum vertex degree $\delta(G)$ and edge-connectivity $k'(G)$. If G has positive Lin--Lu--Yau curvature, then $k'(G)=\delta(G)$.
\end{theorem}

Observe that one can not weaken the assumption of \emph{positive} Lin--Lu--Yau curvature in Theorem \ref{theorem:positive Lin--Lu--Yau curvature and edge-connectivity} to be \emph{non-negative} Lin--Lu--Yau curvature. For example, the infinite path graph has non-negative Lin--Lu--Yau curvature, yet its edge-connectivity equals $1$ despite having a constant vertex degree of $2$. Chen, the first named author and You \cite{CLY25} posed the question of  whether any \emph{finite} connected graph with non-negative Lin--Lu--Yau curvature satisfies $k'(G)=\delta(G)$.

In this paper, we completely answer this problem. 

\begin{theorem}
\label{cor:finite and non-regular}
Let $G$ be a \emph{finite} connected graph with minimum vertex degree $\delta(G)$ and edge-connectivity $k'(G)$. If $G$ has non-negative Lin--Lu--Yau curvature, then $k'(G)=\delta(G)$.
\end{theorem}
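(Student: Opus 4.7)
The plan is a proof by contradiction: suppose $G$ is a finite counterexample, so $G$ has non-negative Lin-Lu-Yau curvature but $\kappa'(G) < \delta(G)$. Write $k := \kappa'(G)$ and $\delta := \delta(G)$, so $k < \delta$. Fix a min-cut $F$ with $|F| = k$, producing a partition $V = A \sqcup B$ whose crossing edges are exactly $F$. Since $|F| < \delta$, each vertex $v$ has at most $k$ neighbors on the opposite side, and hence at least $d(v) - k \geq \delta - k > 0$ neighbors on its own side. The argument exploits the tension between this abundance of same-side neighbors and the Wasserstein constraint on edges of $F$.

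First I would apply the Lin-Lu-Yau inequality $W_1(\mu_x, \mu_y) \leq 1$ to a cut edge $\{x, y\} \in F$ (with $x \in A$, $y \in B$) and combine with a Kantorovich--Rubinstein lower bound via the test function $f = \mathbf{1}_A$ (which is $1$-Lipschitz since each cut edge changes $f$ by exactly $1$ and every non-cut edge leaves $f$ constant). Denoting $d_A(v) = |N(v) \cap A|$ and $d_B(v) = |N(v) \cap B|$, this gives
\[
\frac{d_A(x)}{d(x)} - \frac{d_A(y)}{d(y)} \;\leq\; W_1(\mu_x,\mu_y) \;\leq\; 1.
\]
A direct single-edge analysis is not sharp enough, so I would either (a)~sum the inequality over all of $F$ to obtain a global identity relating the degree sequences on the two sides, or (b)~pick a cut edge $\{x, y\}$ minimizing some auxiliary quantity such as $d_B(x) + d_A(y)$ and use finer $1$-Lipschitz test functions (for example distance-to-$A$ truncations, or $v \mapsto d(v, y) - d(v, x)$) to squeeze the Wasserstein bound further.

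The equality case of these sharpened Wasserstein bounds should force rigid local structure: any optimal transport plan must route almost all cross-cut mass through specific pairs $(z, w)$ of same-side neighbors of $x$ and $y$ at distance $1$, producing new cut edges $\{z, w\} \in F$ that inherit the same imbalance pattern as $\{x, y\}$. Propagating this rigidity along the cut and then into the interiors of $A$ and $B$ (using non-negative Lin-Lu-Yau curvature on the adjacent edges as well) ought to produce an infinite chain of distinct vertices, contradicting $|V| < \infty$.

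The main obstacle is precisely this propagation step: turning a local equality-case conclusion into a global statement that forces infinity. I expect the authors to sidestep ad hoc propagation by first establishing the full classification announced in the abstract---a structural list of all connected graphs with non-negative Lin-Lu-Yau curvature and $\kappa' < \delta$, all of which are forced to be infinite and of a rather restricted ``tube-like'' form (presumably extensions of the two-way infinite path). Once that classification is available, Theorem~\ref{cor:finite and non-regular} follows instantly by observing that no finite graph appears in the list.
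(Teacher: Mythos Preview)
Your final paragraph is exactly right: the paper derives Theorem~\ref{cor:finite and non-regular} by first proving the general bound $\kappa'(G)\geq\delta(G)-1$ (Theorem~\ref{thm:non-regular and non-negative Lin-Lu-Yau curvature}), then establishing the full rigidity classification (Theorem~\ref{thm:non-regular rigidity}) of all connected graphs with non-negative Lin-Lu-Yau curvature and $\kappa'(G)=\delta(G)-1$, and observing that every graph on the resulting list is infinite. So your prediction of the high-level strategy is accurate, and the theorem does follow ``instantly'' once those two results are in hand.

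The direct route you sketch in the earlier paragraphs---Kantorovich--Rubinstein with the indicator $\mathbf{1}_A$, summing over cut edges, then propagating an equality-case rigidity---is not what the paper does, and the gap you yourself name (the propagation step) is real and not easy to close that way. The paper's engine is quite different: the key input is a purely combinatorial inequality for bipartite graphs (Theorem~\ref{theorem:|S_1(e)|=<r-(p+q)/2}), stating that for a bipartite $H$ with no isolated vertices that is not a star one has $\min_{e\in E(H)} |S_1(e)| \leq |E(H)| - \tfrac{1}{2}|V(H)|$, together with a complete description of its equality cases. This is applied to the bipartite graph spanned by a min-cut, and combined with an explicit lower bound on the transport cost across a cut edge (Lemma~\ref{lemma:link between two rigidities} and Claim~\ref{cl:min-cost}) that tracks matchings in $H$ rather than a single dual test function. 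The classification is then obtained by determining, case by case, which extremal bipartite patterns can occur and what structure they force on $G[A]$ and $G[B]$ (Lemma~\ref{lemma:structure of G[]}), and then shifting the cut one layer over to iterate. What this buys over your sketch is that the bipartite inequality isolates the combinatorics of the cut cleanly and its rigidity cases form a short explicit list, turning ``propagation'' into a concrete finite case analysis rather than an open-ended induction.
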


We remark that graphs with positive or non-negative Lin--Lu--Yau curvature can be infinite. It is quite difficult to apply the global finiteness assumption to improve the original proof of Theorem \ref{theorem:positive Lin--Lu--Yau curvature and edge-connectivity} in \cite{CLY25}. To establish Theorem \ref{cor:finite and non-regular}, we find an approach different from that used in \cite{CLY25} and first show the following estimate.


\begin{theorem}\label{thm:non-regular and non-negative Lin--Lu--Yau curvature}
Let $G$ be a connected graph with minimum vertex degree $\delta(G)$ and edge-connectivity $k'(G)$. If $G$ has non-negative Lin--Lu--Yau curvature, then 
\begin{equation}\label{eq:k'(G) geq delta(G)-1}
    k'(G)\geq \delta(G)-1. 
\end{equation}
\end{theorem}
Notice that, the above estimate tells that any connected graph with non-negative Lin--Lu--Yau curvature satisfies $k'(G)\in \{\delta(G)-1,\delta(G)\}$. 
Furthermore, we characterize all graphs achieving the equality in Theorem \ref{thm:non-regular and non-negative Lin--Lu--Yau curvature}.  
\begin{theorem}\label{thm:non-regular rigidity}
Let $G$ be a connected graph with minimum vertex degree $\delta(G)$ and edge-connectivity $k'(G)$. Then $G$ has non-negative Lin--Lu--Yau curvature with $k'(G)= \delta(G)-1$ if and only if
\begin{equation}\label{eq:thm1.4}
G \in 
\begin{cases} 
\mathcal{G}_{k'(G)}, & \text{if } k'(G)=1,\ 2\text{ or }k'(G)\geq5, \\
\{G_3^*\}\cup \mathcal{G}_{3}, & \text{if } k'(G)=3, \\
\{G_4^*\}\cup \Tilde{\mathcal{G}}_{4}, & \text{if } k'(G)=4.
\end{cases}
\end{equation}
\end{theorem}

Theorem \ref{thm:non-regular rigidity} is our main contribution. The sets of graphs, $\mathcal{G}_n, n\in \mathbb{Z}_{>0}$ and $\Tilde{\mathcal{G}}_4$, as well as the two specific graphs $G_3^*$ and $G_4^*$, will be defined in Subsection \ref{subsection:rigidity_notations}. Notably, $\mathcal{G}_1=\{P_\infty\}$, where $P_\infty$ denotes the infinite path graph, and moreover the Cartesian product $P_{\infty}\times K_n$, with $K_n$ the complete graph on $n$ vertices, belongs to the set $\mathcal{G}_n$. In fact, each graph listed in \eqref{eq:thm1.4} is infinite. Therefore, Theorem \ref{cor:finite and non-regular} is a direct consequence of Theorem \ref{thm:non-regular rigidity} together with Theorem \ref{thm:non-regular and non-negative Lin--Lu--Yau curvature}. 

Moreover, each graph listed in \eqref{eq:thm1.4} has exactly two ends. Hua and M\"unch \cite{HM25} proved that any connected graph with non-negative Lin--Lu--Yau curvature has at most two ends. Hence, our results indicate that any connected graph $G$ with non-negative Lin--Lu--Yau curvature and \emph{at most one end} satisfies $k'(G)=\delta(G)$. On the other hand, there are connected graphs with non-negative Lin--Lu--Yau curvature and exactly two ends for which $k'(G)=\delta(G)$.

We will outline the approach used to prove Theorem \ref{thm:non-regular and non-negative Lin--Lu--Yau curvature} and Theorem \ref{thm:non-regular rigidity} in Subsection \ref{subsection:proof}. 

Before diving into more details, we would like to remark on other related works. Horn, Purcilly and Stevens \cite{HPS25} established a lower bound estimate for the vertex-connectivity in terms of the minimum vertex degree and the Bakry--\'Emery curvature lower bounds of the graph. Bakry--\'Emery curvature is another discrete notion of Ricci curvature.  Notice that
Bakry--\'Emery curvature and Lin--Lu--Yau curvature can behave very differently. There are graphs whose Bakry--\'Emery curvature and Lin--Lu--Yau curvature have opposite signs. Chen, Koolen and the first named author \cite{CKL25} showed that every connected graph with non-negative Bakry--\'Emery curvature satisfies $k'(G)\geq \delta(G)-1$. Consequently, any connected regular graph with non-negative Bakry--\'Emery curvature and an even or infinite number of vertices has a perfect matching, see \cite[Theorem 1.1]{CKL25}. 
Indeed, this perfect matching statement is a direct consequence of the edge-connectivity estimate \(k'(G)\geq \delta(G)-1\) combined with Tutte's theorem \cite{Tutte1,Tutte2}. Theorem \ref{thm:non-regular and non-negative Lin--Lu--Yau curvature} is a counterpart of their result in terms of Lin--Lu--Yau curvature. Using the same argument, we obtain the following corollary. 
\begin{corollary}
    Let $G$ be a connected regular graph with an even or infinite number of vertices. If $G$ has non-negative Lin--Lu--Yau curvature, then $G$ has a perfect matching.
\end{corollary}

\subsection{Graphs listed in Theorem \ref{thm:non-regular rigidity}}\label{subsection:rigidity_notations}
We first prepare necessary notations.
Let $k,\ell$ be two positive integers. Recall that $K_k$ denotes the complete graph on $k$ vertices. Let $K_{1,k}$ denote the star graph with $k$ leaves.  
A \emph{double star} graph $ST_{k,\ell}$ is a tree with a central edge $e = x \sim y$, where $x$ has $k$ leaves attached and $y$ has $\ell$ leaves attached. For a graph $G$, the notation $kG$ represents $k$ disjoint copies of $G$. 

We write $V(G)$ and $E(G)$ for the vertex set and edge set of $G$, respectively. For any subsets $S,T\subset V(G)$, we denote by $G[S]$ the induced subgraph on $S$ in $G$; Let $E(S,T):=\{u\sim v:u\in S\ \text{and}\ v\in T\}$ be the set of edges between $S$ and $T$; We denote by $G[S,T]$ the subgraph of $G$ with $V(G[S,T])=S\cup T$ and $E(G[S,T])=E(S,T)$. Notably, we have $G[S]=G[S,S]$. Notice that the subgraph $G[S,T]$ does not always coincide with the edge-induced subgraph of $E(S,T)$. For example, in Figure \ref{fig:An example for graph G[S,T]}, the edge-induced subgraph of $E(S,T)$ is the complete bipartite graph $K_{2,2}$, whereas the subgraph $G[S,T]$ consists of $K_{2,2}$ and two additional isolated vertices.
\begin{figure}[htbp]
    \centering
    \includegraphics[width=0.4\textwidth]{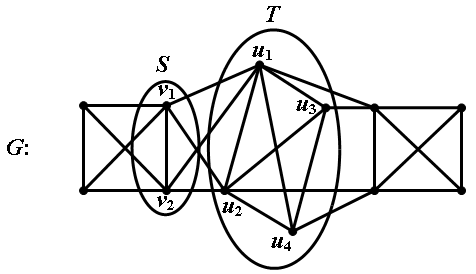}
    \caption{An example of the subgraph $G[S,T]$.}
    \label{fig:An example for graph G[S,T]}
\end{figure}

\begin{definition}[The graph set $\mathcal{G}_n$]\label{def:The graph class mathcal G_n}
    For any positive integer $n$, let $\mathcal{G}_n$ be the set of graphs $G$ satisfying the following properties: There exists a partition of the vertex set $V(G)=\sqcup_{i\in \mathbb{Z}}V_i$, such that
\begin{itemize}
    \item [(i)] $G[V_i]\in \{K_1,K_n\}$ for any $i\in \mathbb{Z}$;
    \item[(ii)] $E(V_i,V_j)=\emptyset$ for any $|i-j|>1$;
    \item[(iii)] $G[V_i,V_{i+1}]\in \{K_{1,n},nK_{1,1}\}$ for any $i\in \mathbb{Z}$.
\end{itemize}
\end{definition}
Note that $
\mathcal{G}_1=\{P_{\infty}\}$ and $P_\infty\times K_n\in \mathcal{G}_n$ for any positive integer $n$. Next, we define the two graphs $G_3^*$ and $G_4^*$.
\begin{definition}[The graphs $G_3^*$]\label{def:The graphs G_3^*}
    We define $G_3^*$ to be the $4$-regular graph satisfying the following properties: There exists a partition of the vertex set $V(G)=\sqcup_{i\in \mathbb{Z}}V_i$, such that 
\begin{itemize}
    \item [(i)] $G[V_i]=K_2$ for any $i\in \mathbb{Z}$;
    \item[(ii)] $E(V_i,V_j)=\emptyset$ for any $|i-j|>1$;
    \item[(iii)] $G[V_i,V_{i+1}]=ST_{1,1}$ for any $i\in\mathbb{Z}$.
\end{itemize}
\end{definition}
\begin{definition}[The graphs $G_4^*$]\label{def:The graphs G_4^*}
We define $G_4^*$ to be the $5$-regular graph satisfying the following properties: There exists a partition of the vertex set $V(G)=\sqcup_{i\in \mathbb{Z}}V_i$, such that 
\begin{itemize}
    \item [(i)] $G[V_i]=K_4$ for any $i\in \mathbb{Z}$;
    \item[(ii)] $E(V_i,V_j)=\emptyset$ for any $|i-j|>1$;
    \item[(iii)]  The edge-induced subgraph of $E(V_i,V_{i+1})$ is isomorphic to $2K_{1,2}$ with $3$ vertices in $V_i$ and the other $3$ vertices in $V_{i+1}$.
\end{itemize}
\end{definition}
The two graphs $G_3^*$ and $G_4^*$ are depicted in Figure \ref{fig:G3*} and Figure \ref{fig:G4*}.
\begin{figure}[H]
    \begin{minipage}{6cm}
    \centering
    \includegraphics[height=2cm,width=5cm]{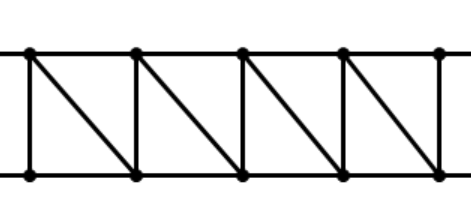}
    \caption{$G_3^*$}\label{fig:G3*}
    \end{minipage}
    \begin{minipage}{7cm}
    \centering
    \includegraphics[height=2cm,width=7cm]{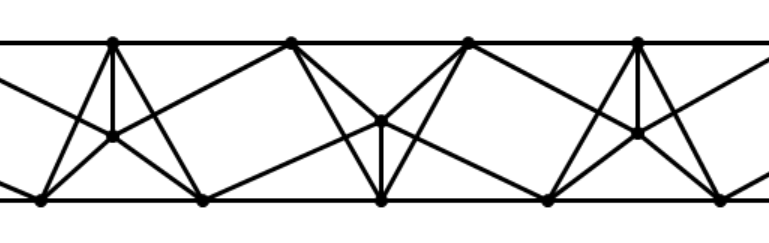}
    \caption{$G_4^*$}\label{fig:G4*}
    \end{minipage}
\end{figure}

The graph set $\Tilde{\mathcal{G}}_4$ is defined as follows.
\begin{definition}[The graph set $\Tilde{\mathcal{G}}_4$]\label{def:The graph set Tilde mathcal G_4}
    We define $\Tilde{\mathcal{G}}_4$ as the set of graphs $G$ satisfying the following properties: There exists a partition of the vertex set $V(G)=\sqcup_{i\in \mathbb{Z}}V_i$, such that 
\begin{itemize}
    \item [(i)] $G[V_i]\in \{K_1,K_2,K_4\}$ for any $i\in \mathbb{Z}$;
    \item[(ii)] $E(V_i,V_j)=\emptyset$ for any $|i-j|>1$;
    \item[(iii)] $G[V_i,V_{i+1}]\in \{K_{1,4},4K_{1,1},K_{2,2},2K_{1,2}\}$  for any $i\in\mathbb{Z}$.
    \item[(iv)] $(|V_{i-1}|,|V_i|,|V_{i+1}|)\neq (4,2,4)$ for any $i\in\mathbb{Z}$.
\end{itemize}
\end{definition}

\begin{remark}\label{remark: crucial condition that (|V_i-1|,|V_i|,|V_i+1|)neq (4,2,4)}
Condition (iv) in Definition \ref{def:The graph set Tilde mathcal G_4} excludes the existence of edges with negative Lin--Lu--Yau curvature in the graph. Let $G$ be a graph whose vertex set can be partitioned as $V(G)=\sqcup_{i\in\mathbb{Z}} V_i$ satisfying conditions (i)--(iii). Suppose that for some index $i$ we have $(|V_{i-1}|, |V_i|, |V_{i+1}|) = (4,2,4)$. Condition (iii) implies that both $G[V_{i-1}, V_i]$ and $G[V_i, V_{i+1}]$ are isomorphic to $2K_{1,2}$, and the resulting local structure is depicted in Figure \ref{fig:negative_curvature}. Notice that each edge in $E(V_{i-1}, V_i)$ or $E(V_i, V_{i+1})$ has Lin--Lu--Yau curvature equal to $-0.2$.
\end{remark}
\begin{figure}[htbp]  
    \centering  
    \includegraphics[height=3.3cm,width=6.5cm]{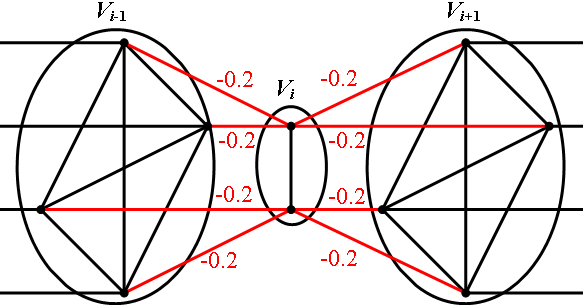}
    \caption{The local structure with $(|V_{i-1}|, |V_i|, |V_{i+1}|) = (4,2,4)$.}
    \label{fig:negative_curvature} 
\end{figure}

Observe that $\mathcal{G}_4$ is a subset of $\Tilde{\mathcal{G}}_4$. To illustrate the difference between these two sets, we depict in Figure~\ref{fig:An example of a graph in mathcal G_4.} a graph that belongs to $\mathcal{G}_4$ but is not the Cartesian product $P_\infty \times K_n$, whereas in Figure~\ref{fig:An example of a graph in Tilde mathcal G_4} we depict a graph in $\Tilde{\mathcal{G}}_4 \setminus \mathcal{G}_4$.

\begin{figure}[H]
    \begin{minipage}{7.3cm}
    \centering
    \includegraphics[height=2cm,width=7cm]{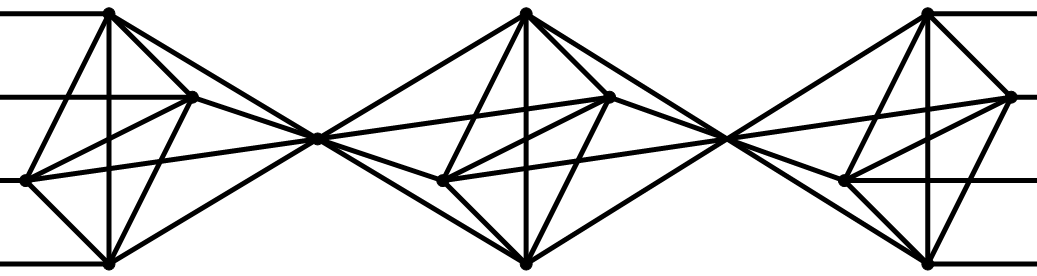}
    \caption{A graph in $\mathcal{G}_4$.}
    \label{fig:An example of a graph in mathcal G_4.}
    \end{minipage}
    \begin{minipage}{7.3cm}
    \centering
    \includegraphics[height=2cm,width=7cm]{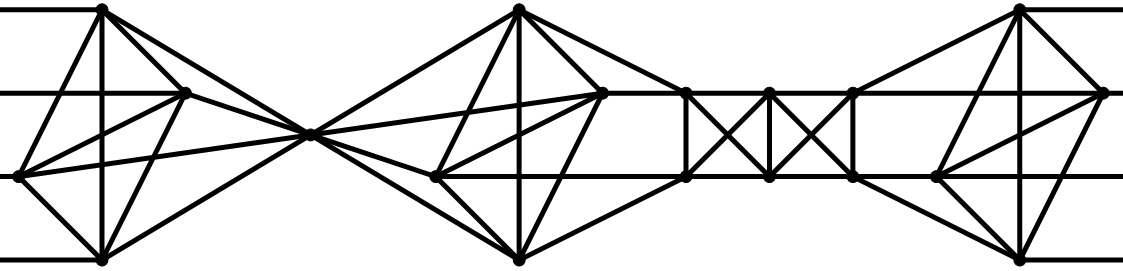}
    \caption{A graph in $\Tilde{\mathcal{G}}_4\setminus \mathcal{G}_4$.}
    \label{fig:An example of a graph in Tilde mathcal G_4}
    \end{minipage}

\end{figure}

\subsection{Strategy of proving the main results}\label{subsection:proof}
We begin by presenting a combinatorial inequality for graphs without isolated vertices and examining its rigidity properties. This inequality serves as a crucial tool for proving the edge-connectivity bound in Theorem~\ref{thm:non-regular and non-negative Lin--Lu--Yau curvature} and for addressing the rigidity problem discussed in Theorem~\ref{thm:non-regular rigidity}.
For an edge $e \in E(G)$, define
\[
S_1(e)_{(G)} := \{ \epsilon \in E(G) \;|\; \text{$e$ and $\epsilon$ share a common vertex} \}.
\]
The inequality and its rigidity characterization can be described as follows.

\begin{theorem}\label{thm:combinatorial}
Let $H$ be a finite graph with no isolated vertices, and suppose $H$ is neither a star graph nor $K_3$. Then
\begin{equation}\label{eq:combinatorial}
\min_{e \in E(H)} |S_1(e)_{(H)}| \leq |E(H)| - \frac{|V(H)|}{2}.
\end{equation}
Moreover, equality in \eqref{eq:combinatorial} holds if and only if $H$ is one of the following:
\[
K_{2,2}, \ K_4, \ K_4 - e, \ K_3 + e, \ nK_{1,1} \ \text{for } n \geq 2, \ 2K_{1,\frac{n}{2}} \ \text{for even } n > 2, \ ST_{\frac{n-1}{2},\frac{n-1}{2}} \ \text{for odd } n \geq 3.
\]
\end{theorem}

Here, $K_4 - e$ denotes the graph obtained by removing one edge from $K_4$, while $K_3 + e$ is formed by adding a new vertex and connecting it to one of the vertices of $K_3$.

Another crucial ingredient is a Lin--Lu--Yau curvature estimate for an edge in a cut. Let $G$ be a connected graph and let $E_0$ be a min-cut. Let $H$ be the edge-induced subgraph of $G$ induced by the edges in $E_0$. This subgraph $H$ is \emph{bipartite} and contains no isolated vertices. In Theorem~\ref{thm:estimate of k_LLY in a min-cut}, we will show that for any edge $e = x \sim y \in E_0 = E(H)$ with $d_x \geq d_y$, the Lin--Lu--Yau curvature $\kappa_{LLY}(e)$ satisfies
\begin{equation}\label{eq:curvature_combinatorial}
    d_x d_y \kappa_{LLY}(e) + d_x\left( 2|E(H)| - |V(H)| - 2|S_1(e)_{(H)}| \right) \leq 2d_x\left( |E(H)| +1 - d_y \right).
\end{equation}

By combining this inequality with Theorem~\ref{thm:combinatorial}, we deduce that if $G$ has non-negative Lin--Lu--Yau curvature and $H$ is not a star graph, then
\[
k'(G) = |E(H)| \geq \delta(G) - 1.
\]
Note that $H$ cannot be $K_3$ since it is bipartite. In the case where $H$ \emph{is} a star graph, a sharper curvature bound is required (see Theorem~\ref{thm:estimate of k_LLY in a star-cut}). All together lead to a proof of Theorem~\ref{thm:non-regular and non-negative Lin--Lu--Yau curvature}.

Now, suppose $G$ is connected with non-negative Lin--Lu--Yau curvature and $k'(G) = \delta(G) - 1$. Then equality must hold in both~\eqref{eq:combinatorial} and~\eqref{eq:curvature_combinatorial}. The rigidity statement in Theorem~\ref{thm:combinatorial} implies that the edge-induced subgraph $H$ of a min-cut in $G$ can only be one of the following:
\[
K_{2,2}, \ nK_{1,1} \ \text{for } n \geq 2, \ 2K_{1,\frac{n}{2}} \ \text{for even } n > 2, \ ST_{\frac{n-1}{2},\frac{n-1}{2}} \ \text{for odd } n \geq 3 \ \text{or} \ K_{1,n} \ \text{for } n \geq 1.
\]

The rigidity result corresponding to~\eqref{eq:curvature_combinatorial} (see Theorem~\ref{thm:estimate of k_LLY in a min-cut}) provides further insight into the local configuration around $H$. Moreover, the above set of possible graphs for $H$ can be further reduced to be (see Lemma \ref{lemma:admissible_graph_H})
\[
K_{2,2}, \ nK_{1,1} \ \text{for } n \geq 2, \ 2K_{1,2}, \ ST_{1,1}\ \text{and} \ K_{1,n} \ \text{for } n \geq 1.
\]

The remaining question is whether these local patterns can be extended to the entire graph. 
We will explore this issue in detail in Section~\ref{section:extension}.
The interplay between curvature bounds for edges in a cut and the combinatorial inequality~\eqref{eq:combinatorial} is particularly noteworthy.

The rest of this paper is organized as follows: In Section \ref{section:preliminary}, we introduce the notations and review basics about Lin--Lu--Yau curvature of graphs. Section \ref{section:combinatorial} contains the proof of Theorem \ref{thm:combinatorial}. In Section \ref{section:curvature}, we establish curvature estimates of edges in a cut, see Theorem \ref{thm:estimate of k_LLY in a min-cut} and Theorem \ref{thm:estimate of k_LLY in a star-cut}. In particular, we prove \eqref{eq:curvature_combinatorial}. Finally, in Section \ref{section:extension}, we complete the proofs of Theorem \ref{thm:non-regular and non-negative Lin--Lu--Yau curvature} and Theorem \ref{thm:non-regular rigidity}. In Appendix \ref{subsection:non-negative LLY curvature}, we check that each graph listed in Theorem \ref{thm:non-regular rigidity} has non-negative Lin--Lu--Yau curvature.

\section{Preliminaries}\label{section:preliminary}

Let $G$ be a graph with vertex set $V(G)$ and edge set $E(G)$. For any $u,v\in V(G)$, we denote by $d(u,v)$ the combinatorial distance between $u$ and $v$, and define 
\[S_1(u)=\{w\in V : d(u,w)=1\}, \text{and} \ B_1(u)=\{w\in V : d(u,w)\leq 1\}.\]
The cardinality of $S_1(u)$ is the vertex degree $d_u$ of $u$.  For any $u\in V(G)$ and $X\subset V(G)$, we define $d(u,X):=\min_{v\in X}d(u,v)$. 
Throughout this paper, all graphs are simple and locally finite, that is, there are no self-loops or multi-edges and the degree of each vertex is finite. 

Next, we recall basics about Ollivier Ricci curvature and Lin--Lu--Yau curvature of graphs. The definition is based on the  concept of the Wasserstein distance between probability measures.

\begin{definition}[\textbf{Wasserstein Distance}]\label{definition:Wasserstein}
    Let $G$ be a graph and $\mu_1,\mu_2$ be two probability measures on $V(G)$. The Wasserstein distance $W(\mu_1, \mu_2)$ is defined as
    \[
    W(\mu_1, \mu_2) = \inf_{\pi} \sum_{u \in V(G)} \sum_{v \in V(G)} d(u, v)  \pi(u, v),
    \]
    where the infimum is taken over all transport plans $\pi: V(G) \times V(G) \to [0, 1]$ satisfying the marginal constraints:
    \[
    \sum_{v \in V(G)} \pi(u, v) = \mu_1(u) \quad \text{and} \quad \sum_{u \in V(G)} \pi(u, v) = \mu_2(v), \ \text{for any } u, v \in V(G).
    \]
\end{definition}
The Kantorovich duality  tells that (see, e.g., ~\cite[Theorem~1.14]{Villani03})
\begin{equation}\label{eq:Kantorovich}
W(\mu_1, \mu_2) = \sup_{\substack{f: V(G) \to \mathbb{R} \\ \text{$1$-Lipschitz}}}  \sum_{u \in V(G)} f(u) \left(\mu_1(u) - \mu_2(u)\right),
\end{equation}
where the supremum is taken over all $1$-Lipschitz functions \(f\). Recall that a function $f:V(G)\to \mathbb{R}$ is called $1$-Lipschitz if $|f(x) - f(y)| \leq d(x, y)$ holds for any $u,v \in V(G).$

For a vertex $x$ and an idleness parameter $\rho \in [0,1]$, we define the probability measure $\mu_x^{\rho}$ as
\begin{equation}\label{eq:mu_rho}
\mu_x^{\rho}(v) = 
\begin{cases} 
\rho, & \text{if } v = x, \\
\frac{1-\rho}{d_x}, & \text{if } v \sim x, \\
0, & \text{otherwise.}
\end{cases}
\end{equation}

\begin{definition}[Ollivier Ricci curvature \cite{Ollivier09,Button3} and Lin--Lu--Yau curvature \cite{LLY11}]\label{def:k_LLY}
    Let $G$ be a graph. For $x, y \in V(G)$, the $\rho$-Ollivier Ricci curvature is
    \[
    \kappa_{\rho}(x,y) = 1 - \frac{W(\mu_x^{\rho}, \mu_y^{\rho})}{d(x,y)}.
    \]
    The Lin--Lu--Yau curvature $\kappa_{LLY}(x,y)$ is then defined by the limit
    \[
    \kappa_{LLY}(x,y) = \lim_{\rho \to 1} \frac{\kappa_{\rho}(x,y)}{1 - \rho}.
    \]
\end{definition}

Notice that $\kappa_{LLY}(x,y) = -\left. \frac{d \kappa_{\rho}(x,y)}{d \rho} \right|_{\rho=1}$ since $\kappa_1(x,y) = 0$.
For an edge $e = x \sim y$, it has been shown in \cite{BCLMP18} that the function $\rho\mapsto \kappa_{\rho}(x,y)$ is linear on the interval $\left[\frac{1}{\max\{d_x,d_y\}+1}, 1\right]$. This yields a limit-free expression for $\kappa_{LLY}(e)$:
 \begin{equation}\label{eq:K_LLY}
      \kappa_{LLY}(e) = \frac{\kappa_{\rho}(x,y)}{1 - \rho} \\ 
      =\frac{1}{1-\rho} \left(1 - W\left(\mu_x^{\rho}, \mu_y^{\rho}\right)\right), \ \ \text{for any}\ \ \rho\in \left[\frac{1}{\max\{d_x,d_y\}+1}, 1\right).
\end{equation}

In case that $d_x=d_y=:\beta$, we have $\mu_x(u),\mu_y(u)\in\{0,\frac{1}{\beta+1}\}$ for any $u\in V(G)$. Then it is a Monge problem to calculate $W(\mu_x,\mu_y)$. That is, an optimal transport plan $\pi$ between $\mu_{x}$ and $\mu_{y}$ can be induced by a bijective map $\phi: B_1(x)\to B_1(y)$ via 
\begin{equation}\label{eq:pi_phi}
    \pi(u,\phi(u))=\frac{1}{\beta+1}
\end{equation} for any $u\in B_1(x)$, see \cite[Proposition 2.7]{Cush20} and \cite[Theorem 2.6]{MW19}. By \cite[Lemma 4.1]{BCLMP18}, we can choose a map $\phi$ for an optimal transport plan such that $\phi(u)=u$ for any $u\in B_1(x)\cap B_1(y)$. That is, we have for an edge $e=x\sim y$ with $d_x=d_y=:\beta$ that
\begin{equation}\label{eq:W_transport_map}
    (\beta+1)\cdot W(\mu_x,\mu_y)=\min_{\psi: B_1(x)\setminus B_1(y)\to B_1(y)\setminus B_1(x)}\sum_{u\in B_1(x)\setminus B_1(y)}d(u,\psi(u)),
\end{equation}
where the minimum is taken over all bijective maps between $B_1(x)\setminus B_1(y)$ and $B_1(y)\setminus B_1(x)$. 

Recall that by triangle inequality for Wasserstein distance, $\kappa_{LLY}(x,y)\geq 0$ for all $x,y\in V(G)$ if and only if $\kappa_{LLY}(e)\geq0$ for all $e\in E(G)$, see \cite[Lemma 2.3]{LLY11}.

\section{A combinatorial inequality and its rigidity}\label{section:combinatorial}

In this section, we prove Theorem  \ref{thm:combinatorial}.

\begin{proof} We first prove the inequality \eqref{eq:combinatorial} by contradiction. Suppose that \eqref{eq:combinatorial} does not hold. That is, we have for each edge $e=u\sim v\in E(H)$ that
\begin{equation}\label{eq:half}
    (d_u-1)+(d_v-1)=|S_1(e)_{(H)}|>|E(H)|-\frac{|V(H)|}{2}=\frac{1}{2}\sum_{w\in V(H)}(d_w-1).
\end{equation}
Since $H$ has no isolated vertices and is not a star graph, $H$ has at least two edges. If any two edges of $H$ have a common vertex, i.e., the line graph of $H$ is a complete graph, then $H$ is either a star graph or $K_3$. Therefore, by our assumption, there exist two disjoint edges $u_i\sim v_i$, $i=1,2$ in $E(H)$. Applying \eqref{eq:half} leads to
\[\sum_{w\in V(H)}(d_w-1)\geq \sum_{i=1}^2\left[(d_{u_i}-1)+(d_{v_i}-1)\right]>\sum_{w\in V(H)}(d_w-1),\]
which is a contradiction. This proves \eqref{eq:combinatorial}.

Assume that the equality holds in \eqref{eq:combinatorial}, that is, $
    \min_{e\in E(H)}|S_1(e)_{(H)}|=|E(H)|-\frac{|V(H)|}{2}$. 
Then we derive by the above argument that
\begin{equation}\label{eq:identity}
    \sum_{w\in V(H)}(d_w-1)= \sum_{i=1}^2\left[(d_{u_i}-1)+(d_{v_i}-1)\right].
\end{equation}

Since $H$ has no isolated vertices and is not a star graph or $K_3$, we have $|V(H)|\geq 4$.
If $|V(H)|=4$, there are only $6$ possible graphs with no isolated vertices which are not star graphs. It is direct to check that the equality in \eqref{eq:combinatorial} holds for all of them: $2K_{1,1},K_4, K_4-e, K_{2,2}, K_3+e$ and $ST_{1,1}$. Note that $ST_{1,1}$ is a path graph with $4$ vertices. 

Next, we assume $|V(H)|>4$. Due to \eqref{eq:identity}, every vertex in $V(H)\setminus\{u_1,u_2,v_1,v_2\}$ is of degree $1$.
If there exists an edge $e=x\sim y$ such that $x,y\in V(H)\setminus\{u_1,u_2,v_1,v_2\}$, then we derive from the equality in \eqref{eq:combinatorial} that
\begin{equation}\label{eq:K2}
    0=|S_1(e)|\geq |E(H)|-\frac{|V(H)|}{2}\geq 0. 
\end{equation}
This forces that $2|E(H)|=\sum_{w\in V(H)}d_w=|V(H)|$. Therefore, every vertex in $H$ is of degree $1$, that is,  $H=nK_{1,1}$ with $n\geq 3$.

If, otherwise, there does not exist an edge $e=x\sim y$ with $x,y\in V(H)\setminus\{u_1,u_2,v_1,v_2\}$, then every vertex in $V(H)\setminus\{u_1,u_2,v_1,v_2\}$ is connected to one of $\{u_1,u_2,v_1,v_2\}$ via an edge. In particular, $H$ has at most $2$ connected components. Moreover, since at least one of  $\{u_1,u_2,v_1,v_2\}$ has degree at least $2$, we have $2|E(H)|>|V(H)|$. Denote by \[L:=\{u\in V(H): d_u=1\}\ \text{and}\ S:=\{w\in V(H): \text{there exists } u\in L \ \ \text{such that}\ \ w\sim u\}.\] Note that $S\subset \{u_1,u_2,v_1,v_2\}$ is not empty. Then we deduce from the equality in \eqref{eq:combinatorial} that
\[d_w-1\geq |E(H)|-\frac{|V(H)|}{2},\ \text{for any }w\in S.\] Therefore, we derive
\[2|E(H)|=\sum_{w\in V(H)}d_w\geq |L|+|S|\left(|E(H)|-\frac{|V(H)|}{2}+1\right)+2|V\setminus(L\cup S)|.\]
Rearranging leads to 
\[|S|\leq 2-\frac{2|V(H)\setminus(L\cup S)|}{2|E(H)|-|V(H)|}.\]
Notice that $|S|>1$. Indeed, if $|S|=1$, there exists $i\in \{1,2\}$ such that $d_{u_i}=d_{v_i}=1$. By an argument similar as in \eqref{eq:K2}, we have $|V(H)|=2|E(H)|$, which is a contradiction. Therefore, we have $|S|=2$ and $V(H)\setminus{(L\cup S)}=\emptyset$. This tells that $H$ is a forest. 
In the following, we divide our discussion into two cases.

\textbf{Case 1:} $H$ has $2$ connected components $H_1$ and $H_2$. Then we have $|V(H)|-|E(H)|=2$. Let $x_i$ be a leaf of $H_i$ and $y_i\in V(H_i)$ be a neighbor of $x_i$. Denote the edge by $e_i=x_i\sim y_i$. Then
\[
|V(H_i)|\geq d_{x_i}+d_{y_i}=1+d_{y_i}=|S(e_i)|+2\geq |E(H)|-\frac{|V(H)|}{2}+2=\frac{|E(H)|}{2}+1,\ \ \text{for } i=1,2.
\]
Summing up leads to
\[|E(H)|+2=|V(H)|=|V(H_1)|+|V(H_2)|\geq  |E(H)|+2.\]
This forces that $V(H_i)=1+d_{y_i}=1+\frac{|E(H)|}{2}$.
Hence, $|E(H)|$ is even and $H_i$ is a star graph with $\frac{|E(H)|}{2}+1$ vertices. That is, we have $H=2K_{1,\frac{n}{2}}$ with $n=|E(H)|>2$ even.

\textbf{Case 2:} $H$ is connected, that is, $H$ is a tree. In this case, we have $|V(H)|-|E(H)|=1$. For convenience, we denote by $S:=\{y_1,y_2\}\subset\{u_1,u_2,v_1,v_2\}$. Then, we must have $y_1\sim y_2$. Since both are connected to a vertex of degree $1$, we estimate
\[1+d_{y_i}\geq |E(H)|-\frac{|V(H)|}{2}+2=\frac{|V(H)|}{2}+1.
\]
Summing up, we deduce
\[|V(H)|\geq d_{y_1}+d_{y_2}\geq |V(H)|.\]
This forces that $d_{y_i}=\frac{|V(H)|}{2}$, $i=1,2$.
This tells that $H=ST_{\frac{n-1}{2},\frac{n-1}{2}}$ with $n=|V(H)|-1=|E(H)|$. Since $d_{y_i}$ is an integer, $n=|E(H)|=|V(H)|-1>3$ is odd.

In conclusion, if the equality in \eqref{eq:combinatorial} holds, then $H$ is one of the following graphs: $K_{2,2}$, $K_4$, $K_4 - e$, $K_3+e$; $nK_{1,1}$ for $n\geq 2$, $2K_{1,\frac{n}{2}}$ for $n> 2$ even, $ST_{\frac{n-1}{2},\frac{n-1}{2}}$ for $n\geq 3$ odd. It is straightforward to check the equality in \eqref{eq:combinatorial} does hold for all those graphs. This completes the proof.
\end{proof}
\section{Lin--Lu--Yau curvature of edges in a cut}\label{section:curvature}

In this section, we prove the following theorem relating \eqref{eq:combinatorial} to the Lin--Lu--Yau curvature.
\begin{theorem}\label{thm:estimate of k_LLY in a min-cut}
Let $G=(V,E)$ be a connected graph. Let $E_0\subset E$ be a cut of $G$ and $H$ be the edge-induced subgraph on $E_0$ of $G$. For any edge $e=x\sim y\in E_0=E(H)$, we have
\begin{equation}\label{eq:the estimate of k_LLY with d_x>=d_y}
   d_xd_y\kappa_{LLY}(e) \leq \max\{d_x,d_y\}\left(|V(H)|+2|S_1(e)_{(H)}|\right)+2\min\{d_x,d_y\}-2d_xd_y.
\end{equation}
If equality is achieved, then every vertex \( u \in A \setminus \{x\} \) (resp., $v\in B\setminus\{y\}$) is adjacent to \( x \) (resp., $y$). 
Here, \( A \) (resp., \( B \)) denotes the set of vertices in \( V(H) \) that lie in the same connected component as \( x \) (resp., \( y \)) once the cut \( E_0 \) is removed.
\end{theorem}
\begin{proof} Recall that $S_1(e)_{(H)}$ is the set of edges in $E_0=E(H)$ incident to the edge $e=x\sim y$.
We denote by \[M:=\left\{\text{end vertices of edges in } S_1(e)_{(H)}\right\}\setminus\{x,y\}.\] Note that $|M|=|S_1(e)_{(H)}|$.

Without loss of generality, we assume that $d_x\geq d_y$. Denote by $a:=\frac{1}{d_x(d_y+1)}$. Consider the following two probability measures as defined in \eqref{eq:mu_rho}:
\[
\mu_x^{\frac{1}{d_y+1}}(u) = 
\begin{cases} 
d_x a, & \text{if } u = x, \\
d_y a, & \text{if } u \sim x, \\
0, & \text{otherwise,}
\end{cases}\ \ \text{and}\ \ \mu_y^{\frac{1}{d_y+1}}(v) = 
\begin{cases} 
d_x a, & \text{if } v = y, \\
d_x a, & \text{if } v \sim y, \\
0, & \text{otherwise,}
\end{cases}
\]
Let $\pi$ be a transport plan such that 
\begin{equation}\label{eq:pi_assumption}
    W\left(\mu_x^{\frac{1}{d_y+1}}, \mu_y^{\frac{1}{d_y+1}}\right)=\sum_{u\in B_1(x)}\sum_{v\in B_1(y)}\pi(u,v)d(u,v).
\end{equation}
By \cite[Lemma 4.1]{BCLMP18}, we can further assume that \[\pi(x,x)=d_xa,\ \pi(y,y)=d_ya\ \text{and}\ \pi(u,u)=d_ya\   \text{for any}\  u\in S_1(x)\cap S_1(y).\] 
This implies that $\pi(x,v)=\pi(v,x)=0$ for any $v\neq x$, and $\pi(y,v)=0$ for any $v\neq y$.
Let us define the following sets:
\begin{align*}
Z_1:=&\left\{(u,v)\in B_1(x)\setminus\{x,y\}\times B_1(y)\setminus\{x,y\}:  u\in M \right\},
\\
Z_2:=&\left\{(u,v)\in B_1(x)\setminus\{x,y\}\times B_1(y)\setminus\{x,y\}:  u\not\in M\ \text{and}\ v\in M \right\},
\\
  \text{and}\  Z_3:=&\left\{(u,v)\in B_1(x)\setminus\{x,y\}\times B_1(y)\setminus\{x,y\}: u\not\in M\ \text{and}\ v\not\in M\right\}.
\end{align*}
By definition, we have
\begin{align}
    1=&d_x(d_y+1)a=\pi(x,x)+\pi(y,y)+\sum_{u\in B_1(x)\setminus\{x,y\}}\pi(u,y)+\sum_{i=1}^3\sum_{(u,v)\in Z_i}\pi(u,v)\notag
    \\
    = &2d_x a+\sum_{i=1}^3\sum_{(u,v)\in Z_i}\pi(u,v).\label{eq:pi_estimate}
\end{align}
Observing from \eqref{eq:pi_assumption} that $\sum_{u\not\in M}\pi(u,v)\leq (d_x-d_y)a$ for any $v\in B_1(x)\cap B_1(y)\cap M$. This leads to the following estimate:
\begin{align}\label{eq:Z_1_2_estimate}
    &\sum_{i=1}^2\sum_{(u,v)\in Z_i}\pi(u,v)\notag\\
    \leq &d_ya|B_1(x)\cap M|+d_xa|\left(B_1(y)\cap M\right)\setminus B_1(x)|+(d_x-d_y)a|B_1(y)\cap B_1(x)\cap M|
    \leq  d_xa|M|.
\end{align}
Inserting \eqref{eq:Z_1_2_estimate} into \eqref{eq:pi_estimate} and rearranging yield that
\begin{equation}\label{eq:Z_2_lowerbound}
    \sum_{(u,v)\in Z_3}\pi(u,v)\geq d_x(d_y-1-|M|)a.
\end{equation}
We further consider the following two subsets of $Z_3$,
\[Z_{3,1}=\{(u,v)\in Z_3: d(u,v)=1\} \ \text{and}\ Z_{3,2}=\{u\in Z_3: d(u,v)=2\}.\]
Recall that $E_0=E(H)$ is a cut of $G$. Let $V(G)=X\sqcup Y$ be the partition of $V(G)$ such that $x\in X$, $y\in Y$ and $E(X,Y)=E_0=E(H)$. This leads to the following observations: For any $(u,v)\in Z_{3,1}$, we have $u\in X$, $v\in Y$ and hence the edge $u\sim v$ must lie in $E_0=E(H)$. That is, both $u$ and $v$ lie in $V(H)$; For any $u\in Z_{3,2}$, we have $u\in X$, $v\in Y$ and hence the shortest $2$-path connecting $u$ and $v$ in $G$ contains exactly one edge in $E_0=E(H)$. That is, at least one of $u$ and $v$ lies in $V(H)$. Then, we derive that
\begin{align}\label{eq:Z_31_32}
    &2\sum_{(u,v)\in Z_{3,1}}\pi(u,v)+\sum_{(u,v)\in Z_{3,2}}\pi(u,v)\notag\\
    \leq &d_ya|\left(B_1(x)\cap V(H)\right)\setminus\left(\{x,y\}\cup M\right)|+d_xa|
    \left(B_1(y)\cap V(H)\right)\setminus\left(\{x,y\}\cup M\right)|\notag
    \\\leq &d_xa \left(|V(H)|-2-|M|\right).
\end{align}
Now we estimate the optimal transportation distance
\begin{align}
    &W\left(\mu_x^{\frac{1}{d_y+1}}, \mu_y^{\frac{1}{d_y+1}}\right)=\sum_{u\in B_1(x)}\sum_{v\in B_1(y)}\pi(u,v)d(u,v)\notag\\
    \geq&\sum_{u\in B_1(x)\setminus \{x,y\}}\pi(u,y)d(u,y)+\sum_{(u,v)\in Z_3}\pi(u,v)d(u,v) \label{eq:drop_Z_1_Z_2}\\
    =&2(d_x-d_y)a+\sum_{i=1}^2\sum_{(u,v)\in Z_{3,i}}\pi(u,v)i+3\left(\sum_{(u,v)\in Z_3}\pi(u,v)-\sum_{i=1}^2\sum_{(u,v)\in Z_{3,i}}\pi(u,v)\right)\notag\\
    = & 2(d_x-d_y)a+3\sum_{(u,v)\in Z_3}\pi(u,v)-\sum_{i=1}^2\sum_{(u,v)\in Z_{3,i}}\pi(u,v)(3-i).\notag
\end{align}
Applying \eqref{eq:Z_2_lowerbound} and \eqref{eq:Z_31_32} leads to 
\begin{align}\label{eq:W_lower_bound}
    W\left(\mu_x^{\frac{1}{d_y+1}}, \mu_y^{\frac{1}{d_y+1}}\right)\geq 2(d_x-d_y)a+3d_x(d_y-1-|M|)a-d_xa(|V(H)|-2-|M|).
\end{align}
By \eqref{eq:K_LLY}, we obtain
\begin{align}
    d_xd_y\kappa_{LLY}(e)=d_x(d_y+1)\left(1-W\left(\mu_x^{\frac{1}{d_y+1}}, \mu_y^{\frac{1}{d_y+1}}\right)\right)
    \leq d_x\left(|V(H)|+2|M|\right)+2d_y-2d_xd_y.
\end{align}
This proves \eqref{eq:the estimate of k_LLY with d_x>=d_y}. Moreover, if the equality in \eqref{eq:the estimate of k_LLY with d_x>=d_y} holds, then the equalities in \eqref{eq:Z_1_2_estimate}, \eqref{eq:Z_31_32} and \eqref{eq:drop_Z_1_Z_2} must hold. Since the equality \eqref{eq:drop_Z_1_Z_2} holds, we have $\pi(u,v)d(u,v)=0$ for any $(u,v)\in Z_1\sqcup Z_2$. This tells in particular that $\pi(u,v)=0$ for any $(u,v)\in Z_2$, and $\pi(u,v)\neq 0$ only if $d(u,v)=0$. Then, we applying the equality in \eqref{eq:Z_1_2_estimate} to deduce
\begin{align*}
    d_xa|M|=\sum_{(u,v)\in Z_1\sqcup Z_2}\pi(u,v)=\sum_{(u,v)\in Z_1}\pi(u,v)=\sum_{u\in M}\pi(u,u)\leq d_ya|M|.
\end{align*}
Therefore, we have $d_x=d_y=\pi(u,u)$ for any $u\in M$ if $M\neq \emptyset$. This implies that $M\subset B_1(x)\cap B_1(y)$. 

We further apply the equality in \eqref{eq:Z_31_32} to conclude that $u\sim x$ for any $u\in A\setminus \{x\}$ and $v\sim y$ for any $v\in B\setminus \{y\}$, where $A:=X\cap V(H)$ and $B:=Y\cap V(H)$. This completes the proof of this theorem.
\end{proof}

\begin{theorem}\label{thm:estimate of k_LLY in a star-cut}
Let $G=(V,E)$ be a connected graph. Let $E_0\subset E$ be a cut of $G$ and $H$ be the edge-induced subgraph of $E_0$ in $G$. Suppose that $H$ is a star graph. Then we have for any edge $e=x\sim y\in E_0=E(H)$ with $y$ being a leaf in $H$ that,
    \begin{equation}\label{eq:H_star}
    d_x d_y\kappa_{LLY}(e)\leq d_x\left(\alpha_{x,y}+2-d_y\right)+d_y\left(2|S_1(e)_{(H)}|+2-d_x\right),
  \end{equation}
  where $\alpha_{x,y}:=|S_1(x)\cap S_1(y)|$.
\end{theorem}
\begin{remark}
Theorem \ref{thm:estimate of k_LLY in a star-cut} is an improvement of Theorem \ref{thm:estimate of k_LLY in a min-cut} for the case of $H$ being a star graph. Indeed, for any edge $e=x\sim y\in E_0=E(H)$ with $y$ being a leaf, we have
\[d_x(\alpha_{x,y}+2)+2d_y\leq \max\{d_x,d_y\}|V(H)|+2\min\{d_x,d_y\}.\]
In the above, we used $\alpha_{x,y}+2\leq |V(H)|$.  We point out that, in case that $H$ is not a star graph, Theorem \ref{thm:estimate of k_LLY in a min-cut} can not be improved as $d_xd_y\kappa_{LLY}(e) \leq \max\{d_x,d_y\}\left(\alpha_{x,y}+2+2|S_1(e)_{(H)}|\right)+2\min\{d_x,d_y\}-2d_xd_y.$ A counterexample is given by an edge not included in a $3$-cycle of $P_{\infty}\times K_3$.
\end{remark}
\begin{proof}
Let $V(G)=X\sqcup Y$ be the partition of $V(G)$ such that $E(X,Y)=E_0=E(H)$ and $x\in X$. Set $Z:=X\setminus\{x\}$. For convenience, we denote $a:=\frac{1}{d_x(d_y+1)}$. By Kantorovich duality \eqref{eq:Kantorovich}, we estimate
\begin{equation*}
  \begin{aligned}
    &W\left(\mu_y^{\frac{1}{d_y+1}},\mu_x^{\frac{1}{d_y+1}}\right)
    \geq \sum_{v \in V} d(v,Z) \left(\mu_y^{\frac{1}{d_y+1}}(v) -\mu_x^{\frac{1}{d_y+1}}(v)\right)\\
   =&2d_xa(\alpha_{x,y}+1)-2d_ya\left(|S_1(e)_{(H)}|+1\right)+3d_xa(d_y-1-\alpha_{x,y})
  \end{aligned}
\end{equation*}
Therefore, we deduce by (\ref{eq:K_LLY}) that
\begin{align*}
    d_xd_y\kappa_{LLY}(e)&=d_x(d_y+1) \left(1 - W\left(\mu_y^{\frac{1}{d_y+1}},\mu_x^{\frac{1}{d_y+1}}\right)\right)\\ 
    &\leq d_x\left(\alpha_{x,y}+2-d_y\right)+d_y\left(2|S_1(e)_{(H)}|+2-d_x\right).
\end{align*}
This completes the proof.
\end{proof}

\section{Proofs of the main results}\label{section:extension}

We prove Theorem \ref{thm:non-regular and non-negative Lin--Lu--Yau curvature} and Theorem \ref{thm:non-regular rigidity} in this section.

\begin{proof}[Proof of Theorem \ref{thm:non-regular and non-negative Lin--Lu--Yau curvature}]
Let $E_0\subset E$ be a min-cut of $G$ and $H$ be the edge-induced subgraph on $E_0$ of $G$. Then $H$ is a bipartite graph without isolated vertices. We divide our arguments into two cases.

\textbf{Case 1:} $H$ is a star graph. Let $e=x\sim y\in E_0=E(H)$ be an edge of $H$. Observe that $|S_1(e)_{(H)}|=|E(H)|-1=k'(G)-1$.
We assume without loss of generality that $y$ is a leaf vertex in $H$. 
By Theorem \ref{thm:estimate of k_LLY in a star-cut}, we derive that
\begin{equation*}
     d_xd_y\kappa_{LLY}(e)\leq d_x\left(\alpha_{x,y}+2-d_y\right)+d_y\left(2k'(G)-d_x\right).
\end{equation*}
Since $E_0=E(H)$ is a min-cut, we have $d_x\geq 2k'(G)$.
Therefore, we deduce that
\begin{equation}\label{eq:1.3_estimate_3}
   0\leq d_xd_y\kappa_{LLY}(e)+d_y(d_x-2k'(G))\leq d_x(\alpha_{x,y}+2-d_y).
\end{equation}
Rearranging yields that
\begin{equation}\label{eq:1.3_estimate_4}
    \delta(G)\leq d_y\leq \alpha_{x,y}+2\leq k'(G)+1.
\end{equation}

\textbf{Case 2:} $H$ is not a star graph. Let $e=x\sim y\in E_0=E(H)$ be an edge of $H$ attaining the minimum of $|S_1(e)_{(H)}|$. We assume without loss of generality that $d_x\geq d_y$. Applying Theorem \ref{thm:estimate of k_LLY in a min-cut}, we estimate
\begin{equation}\label{eq:1.3_estimate_1}
       0\leq d_xd_y\kappa_{LLY}(e)\leq d_x\left(|V(H)|+2|S_1(e)_{(H)}|\right)+2d_y-2d_xd_y
    \leq 2d_x|E(H)|+2d_x-2d_xd_y 
\end{equation}
In the second inequality above, we have applied Theorem \ref{thm:combinatorial} and the assumption $d_y\leq d_x$. Since $G$ has non-negative Lin--Lu--Yau curvature, we derive from \eqref{eq:1.3_estimate_1} that
\begin{equation}\label{eq:1.3_estimate_2}
    k'(G)=|E(H)|\geq d_y-1\geq \delta(G)-1.
\end{equation}
This completes the proof of Theorem \ref{thm:non-regular and non-negative Lin--Lu--Yau curvature}.
\end{proof}

Next, we prove Theorem \ref{thm:non-regular rigidity}. Let us first characterize possible local structures around the graph $H$ induced by a min-cut $E_0$.

\begin{lemma}\label{lemma:structure of G[]}
Let $G=(V,E)$ be a connected graph with non-negative Lin--Lu--Yau curvature and $k'(G)= \delta(G)-1$. Let $E_0\subset E$ be a min-cut of $G$ and $H$ be the edge-induced subgraph of $E_0$ in $G$.  Denote by $V(G)=X\sqcup Y$ the partition of $V(G)$ such that $E(X,Y)=E_0=E(H)$. Set $A:= X\cap V(H)$ and $B:=Y\cap V(H)$. 
\begin{itemize}
  \item [(1)] If $H$ is a star graph $K_{1,n}$ for $n\geq 1$, assuming without loss of generality that $A=\{x\}$ contains only one vertex $x$, then we have
    \begin{itemize}
      \item [(a)] $d_{x}=2k'(G)$;
      \item [(b)] $d_y=k'(G)+1$, for any $y\in B$;
      \item [(c)] $G[B]=K_{|B|}$.
    \end{itemize} 
  \item [(2)] If $H$ is not a star graph, then we have
    \begin{itemize}
    \item [(a)] $H\in \{K_{2,2}, \ nK_{1,1} \ \text{for } n \geq 2, \ 2K_{1,\frac{n}{2}} \ \text{for even } n > 2, \ ST_{\frac{n-1}{2},\frac{n-1}{2}} \ \text{for odd } n \geq 3\}$;
      \item [(b)]$d_x=d_y=k'(G)+1$ for any $x\in A$ and $y\in B$;
      \item [(c)] $G[A]=K_{|A|}$ and $G[B]=K_{|B|}$.
    \end{itemize} 
\end{itemize}  
\end{lemma}
\begin{proof} We first assume that $H$ is a star graph. For any $y\in B$, we apply the argument of \textbf{Case 1} in the proof of Theorem \ref{thm:non-regular and non-negative Lin--Lu--Yau curvature} to the edge $e=x\sim y\in E_0=E(H)$. Then the property $k'(G)=\delta(G)-1$ forces the equalities in \eqref{eq:1.3_estimate_4}, that is, $d_y=k'(G)+1=\alpha_{x,y}+2$. This proves $(1)(b)$ and $\alpha_{x,y}=k'(G)-1=|E(H)|-1$. The latter shows that $y$ is adjacent to all other vertices in $B$. This proves (1)(c).  Furthermore, the fact $k'(G)=\delta(G)-1$ forces the equalities in \eqref{eq:1.3_estimate_3}, that is $\kappa(e)=0$ and $d_x=2k'(G)$. The latter proves (1)(a). 

Next, we assume that $H$ is not a star graph. Let $e=x\sim y\in E_0=E(H)$ be an edge assuming $\min_{e\in E(H)}|S_1(e)_{(H)}|$. Assume without loss of generality that $x\in X$ and $y\in Y$. We apply the argument of \textbf{Case 2} in the proof of Theorem \ref{thm:non-regular and non-negative Lin--Lu--Yau curvature} to $e$. Then the property $k'(G)=\delta(G)-1$ forces the equalities in \eqref{eq:1.3_estimate_1} and \eqref{eq:1.3_estimate_2}. The first equality in \eqref{eq:1.3_estimate_1} tells that $\kappa_{LLY}(e)=0$. Applying Theorem \ref{thm:estimate of k_LLY in a min-cut}, we derive from the second equality that $x$ is adjacent to all other vertices in $A$ and $y$ is adjacent to all other vertices in $B$. The third equality in \eqref{eq:1.3_estimate_1} forces $d_x=d_y$ and, by Theorem \ref{thm:combinatorial}, (2)(a). The equalities in \eqref{eq:1.3_estimate_2} imply that $d_y=k'(G)+1$. Hence, we have $d_x=d_y=k'(G)+1$. 

If  $H\in \{K_{2,2}, \ nK_{1,1} \ \text{for } n \geq 2, \ 2K_{1,\frac{n}{2}} \ \text{for even } n > 2\}$, we observe that every edge $e\in E(H)$ has the same value $|S_1(e)_{(H)}|$. Hence, we can apply the above argument for each $e\in E(H)$. This proves (2)(b) and (2)(c). If $H\not\in \{K_{2,2}, \ nK_{1,1} \ \text{for } n \geq 2, \ 2K_{1,\frac{n}{2}} \ \text{for even } n > 2\}$, we deduce by (2)(a) that $H= ST_{\frac{n-1}{2},\frac{n-1}{2}}$ for odd $n \geq 3$.  Let $u\sim v, u\in A$ and $v\in B$ be the central edge of $ST_{\frac{n-1}{2},\frac{n-1}{2}}$. Then the edges assuming $\min_{e\in E(H)}|S_1(e)_{(H)}|$ are all edges in $E(H)$ except for $u\sim v$. Notice that those edges cover all vertices in $V(H)$. Applying the above argument for all those edges, we conclude (2)(b) and (2)(c).
\end{proof}

In the following lemma, we show that the local structure of the graph $H$ induced by a min-cut $E_0$ has further restrictions. 
\begin{lemma}\label{lemma:admissible_graph_H}
Let $G=(V,E)$ be a connected graph with non-negative Lin--Lu--Yau curvature and $k'(G)= \delta(G)-1$. Let $E_0\subset E$ be a min-cut of $G$ and $H$ be the edge-induced subgraph of $E_0$ in $G$. Then we have 
\[
H\in \{K_{2,2}, \ nK_{1,1} \ \text{for } n \geq 2, \ 2K_{1,2}, \ ST_{1,1}, \ K_{1,n} \ \text{for } n \geq 1\}.
\]
\end{lemma}
\begin{proof} By Lemma \ref{lemma:structure of G[]}, it is enough to show that $H$ cannot be $2K_{1,\frac{n}{2}}$ for even $n>4$ or $ST_{\frac{n-1}{2},\frac{n-1}{2}}$ for odd $n>3$.  Denote by $V(G)=X\sqcup Y$ the partition of $V(G)$ such that $E(X,Y)=E_0=E(H)$. Set $A:= X\cap V(H)$ and $B:=Y\cap V(H)$. 
We argue by contradiction. 

\textbf{Case 1:} Assume that $H =2K_{1,\frac{n}{2}}$ for even $n>4$. Then $n=k'(G)$. By Lemma \ref{lemma:structure of G[]}, we have either $G[A] = K_2, G[B] = K_n$ or $G[A]=G[B]=K_{\frac{n}{2}+1}$, see Figure \ref{fig:2_and_n} and Figure \ref{fig:n_halfplus1}.
\begin{figure}[H]
    \begin{minipage}{7cm}
    \centering
    
    \includegraphics[height=3cm,width=5cm]{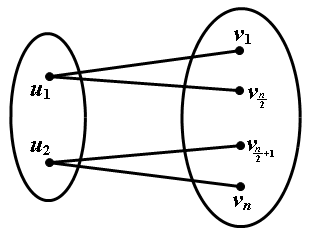}
    \caption{$G[A]\neq G[B]$}\label{fig:2_and_n}
    \end{minipage}
    \begin{minipage}{7cm}
    \centering
    
    \includegraphics[height=3cm,width=5cm]{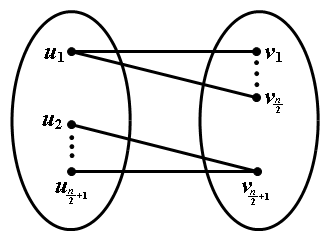}
    \caption{$G[A]=G[B]$}\label{fig:n_halfplus1}
    \end{minipage}
\end{figure}

We first consider the case that $G[A]=K_2$ and $G[B]=K_{n}$. Let us denote $A:= \{u_1, u_2\}$ and $B:=\{v_1,\ldots,v_n\}$ such that $u_1\sim v_{i}$ for any $i=1,\ldots \frac{n}{2}$ and $u_2\sim v_{j}$ for any $j=\frac{n}{2}+1,\ldots,n$. Applying Lemma \ref{lemma:structure of G[]}, we have $d_{u_i} = d_{v_j} = k'(G)+1=n+1$ for $i=1,2$ and $j = 1, \dots, n$, $|S_1(u_1)\setminus (A\cup B)|=\frac{n}{2}$ and $|S_1(v_1)\setminus(A\cup B)|=1$. Denote $S_1(u_1)  \setminus (A \cup B)=\{w_1, \dots, w_{\frac{n}{2}}\}$ and $S_1(v_1)  \setminus (A \cup B)=\{t\} $. Notice that 
\[ 
S_1(u_1) = \left\{u_2, v_1, \ldots, v_{\frac{n}{2}}, w_1, \ldots, w_{\frac{n}{2}}\right\}\ \text{and}\ S_1(v_1) = \left\{u_1, v_2, \ldots, v_r, t\right\}.
\]
Consider the Wasserstein distance $W(\mu_{u_1}, \mu_{v_1})$, where $\mu_{u_1}:=\mu_{u_1}^{\frac{1}{n+2}}, \mu_{v_1}:=\mu_{v_1}^{\frac{1}{n+2}}$ are the two uniform probability measures on $B_1(u_1), B_1(v_1)$, respectively. By (\ref{eq:pi_phi}) and (\ref{eq:W_transport_map}), an optimal transport plan can be induced by a bijective map $\phi:B_1(u_1)\to B_1(v_1)$ such that  $\phi(u)=u$ for any $u\in B_1(u_1)\cap B_1(v_1)$. If at least two of the vertices $w_1, \dots, w_{\frac{n}{2}}$, say $w_1$ and $w_2$, are not adjacent to $u_2$, then we have $d(w_1,\phi(w_1)) =d(w_2,\phi(w_2))=3$, and hence
\begin{equation*}
   \begin{split}
      (n+2)\cdot W(\mu_{u_1},\mu_{v_1}) &=d(u_2,\phi(u_2)) +\sum_{i=1}^2d(w_i,\phi(w_i))+\sum_{j=3}^{n/2}d(w_j,\phi(w_j))\\
        &\geq 1+6+2(n/2-2)=n+3.
   \end{split}
   \nonumber
\end{equation*}
 By (\ref{eq:K_LLY}), this implies the Lin--Lu--Yau curvature $\kappa_{LLY}(u_1,v_1)<0$, a contradiction. Therefore, at most one of the vertices $w_1, \dots, w_{\frac{n}{2}}$ is not adjacent to $u_2$. Without loss of generality, we assume $w_i\sim u_2$ for any $i=2,\ldots, \frac{n}{2}$. Since $d_{u_2}=k'(G)+1=n+1$ by Lemma \ref{lemma:structure of G[]}, the set $S_1(u_2) \setminus (\{w_2, \dots, w_{\frac{n}{2}}\} \cup A \cup B)$ contains only one vertex $w_0$. Notice that $w_0$ may coincide with $w_1$. Set $X':= X \setminus A$ and $Y':= Y \cup A$. Then $|E(X', Y')| = n$ and hence $E(X',Y')$ is a min-cut. 
 Let $H'$ be the edge-induced subgraph on $E(X',Y')$. Using Lemma \ref{lemma:structure of G[]} and recalling $n>4$ is even, we deduce that $H'\in \{nK_{1,1}, \ 2K_{1,\frac{n}{2}}, K_{1,n}\}.$
Observe that $A' :=X'\cap V(H') =\{w_1, \dots, w_{\frac{n}{2}}, w_0\}$ and $B' :=Y'\cap V(H')= \{u_1, u_2\}$. Then, $|A'|$ equals either $n/2$ or $n/2+1$ and $|B'|=2$. None of $nK_{1,1}, \ 2K_{1,\frac{n}{2}}, K_{1,n}$ satisfies this property. A contradiction. Therefore, the case $G[A]=K_2$ and $G[B]=K_n$ cannot happen. 

Next, we consider the case that $G[A]=G[B]=K_{\frac{n}{2}+1}$. Denote $A = \{u_1, \dots, u_{\frac{n}{2} + 1}\}$ and $B = \{v_1, \dots, v_{\frac{n}{2} + 1}\}$, where $u_1 \sim v_i$ for $i=1,\ldots,\frac{n}{2}$ and $u_j \sim v_{\frac{n}{2} + 1}$ for $j = 2, \dots, \frac{n}{2}+1$. By Lemma \ref{lemma:structure of G[]}, we deduce that $d_{u_i} = d_{v_i} = k'(G)+1=n+1$ for $i = 1, \dots, \frac{n}{2}+1$, $|S_1(u_1)\setminus(A\cup B)|=1$, and $|S_1(v_1)\setminus (A\cup B)|=\frac{n}{2}$. Let $S_1(u_1)  \setminus (A \cup B)=\{t\}$ and $S_1(v_1)  \setminus (A \cup B)=\{w_1, \dots, w_{\frac{n}{2}}\}$. Then
\[ 
S_1(u_1) = \left\{u_2, \dots, u_{\frac{n}{2} + 1}, v_1, \dots, v_{\frac{n}{2}}, t\right\}\ \text{and}\ S_1(v_1) = \left\{u_1, v_2, \dots, v_{\frac{n}{2} + 1}, w_1, \dots, w_{\frac{n}{2}}\right\}.
\]
Let $\phi:B_1(u_1)\to B_1(v_1)$ be a bijective map satisfying $\phi(u)=u$ for any $u\in B_1(u_1)\cap B_1(v_1)$, which induces an optimal transport plan between $\mu_{u_1}$ and $\mu_{v_1}$. If at least two of the vertices $w_1, \dots, w_{\frac{n}{2}}$, say $w_1$ and $w_2$, are not adjacent to $v_{\frac{n}{2}+1}$, then $d(\phi^{-1}(w_1),w_1)=d(\phi^{-1}(w_2),w_2)=3$. Thus,
\begin{align*}
      (n+2)\cdot W(\mu_{u_1},\mu_{v_1}) 
    =&d(\phi^{-1}(v_{\frac{n}{2}+1}),v_{\frac{n}{2}+1})+\sum_{i=1}^{n/2}d(\phi^{-1}(w_i),w_i)\geq 1+6+2(n/2-2)=n+3.
\end{align*}
By (\ref{eq:K_LLY}), we have $\kappa_{LLY}(u_1,v_1)<0$, a contradiction. Therefore, at most one of the vertices $w_1, \dots, w_{\frac{r}{2}}$ is not adjacent to $v_{\frac{r}{2}+1}$. Without loss of generality, we assume  let $w_i\sim v_{\frac{n}{2}+1}$ for $i=2,\dots,\frac{n}{2}$. Then
\[
d_{v_{\frac{n}{2}+1}}\geq \left|\left\{u_2,\dots,u_{\frac{n}{2}+1}\right\}\right|+\left|\left\{v_1,\dots,v_{\frac{n}{2}}\right\}\right|+\left|\left\{w_2,\dots,w_{\frac{n}{2}}\right\}\right|=\frac{3n}{2}-1.
\]
However, we have $d_{v_{\frac{n}{2}+1}}=k'(G)+1=n+1$ by Lemma \ref{lemma:structure of G[]}. Since $n>4$ is even, we have $\frac{3n}{2}-1>n+1$, a contradiction. Therefore, the case that $G[A]=G[B]=K_{\frac{n}{2}+1}$ cannot happen.

In conclusion, we have $H$ cannot be $2K_{1,\frac{n}{2}}$ for even $n>4$.

\textbf{Case 2:} Assume that $H = ST_{\frac{n-1}{2},\frac{n-1}{2}}$ for odd $n>3$. Then $n=k'(G)$.
Denote $A = \{u_1, \dots, u_{\frac{n + 1}{2}}\}$ and $B = \{v_1, \dots, v_{\frac{n + 1}{2}}\}$, where $u_1 \sim v_i$ and $u_i \sim v_{\frac{n + 1}{2}}$ for $i= 1, \dots, \frac{n + 1}{2}$. By Lemma \ref{lemma:structure of G[]}, we deduce that $d_{u_i} = d_{v_i} = k'(G)+1=n+1$ for $i = 1, \dots, \frac{n+1}{2}$, $|S_1(u_1)\setminus(A\cup B)|=1$ and $|S_1(v_1) \setminus (A \cup B)|=\frac{n+1}{2}$. Let $S_1(u_1)  \setminus (A \cup B)=\{t\}$ and $S_1(v_1) \setminus (A \cup B)=\{w_1, \dots, w_{\frac{n + 1}{2}}\}$. Then
\[ 
S_1(u_1) =\left \{u_2, \dots, u_{\frac{n + 1}{2}}, v_1, \dots, v_{\frac{n + 1}{2}}, t\right\}\ \text{and}\ S_1(v_1) = \left\{u_1, v_2, \dots, v_{\frac{n + 1}{2}}, w_1, \dots, w_{\frac{n + 1}{2}}\right\}.
\]
Let $\phi:B_1(u_1)\to B_1(v_1)$ be a bijective map satisfying $\phi(u)=u$ for any $u\in B_1(u_1)\cap B_1(v_1)$, which induces an optimal transport plan between $\mu_{u_1}$ and $\mu_{v_1}$. If at least two of the vertices $w_1, \dots, w_{\frac{n+1}{2}}$, say $w_1$ and $w_2$, are not adjacent to $v_{\frac{n+1}{2}}$, then $d(\phi^{-1}(w_1),w_1)=d(\phi^{-1}(w_2),w_2)=3$. Thus,
\begin{align*}
      (n+2)\cdot W(\mu_{u_1},\mu_{v_1})
        =&\sum_{i=1}^{(n+1)/2}d(\phi^{-1}(w_i),w_i)
        \geq 6+2((n+1)/2-2)=n+3.
\end{align*}
By (\ref{eq:K_LLY}), we have $\kappa_{LLY}(u_1,v_1)<0$, a contradiction. Therefore, at most one of the vertices $w_1, \dots, w_{\frac{n+1}{2}}$ is not adjacent to $v_{\frac{n+1}{2}}$. Without loss of generality, we assume that $w_i\sim v_{\frac{r+1}{2}}$ for any $i=2,\dots,\frac{n+1}{2}$. Then
\[
d_{v_{\frac{n+1}{2}}}\geq \left|\left\{u_1,\dots,u_{\frac{n+1}{2}}\right\}\right|+\left|\left\{v_1,\dots,v_{\frac{n-1}{2}}\right\}\right|+\left|\left\{w_2,\dots,w_{\frac{n+1}{2}}\right\}\right|=\frac{3n-1}{2}.
\]
Lemma \ref{lemma:structure of G[]} tells that $d_{v_{\frac{n+1}{2}}}=n+1$. However, $\frac{3n-1}{2}>n+1$ for odd $n>3$. This is a contradiction. Therefore, $H$ cannot be $ST_{\frac{n-1}{2},\frac{n-1}{2}}$ for any odd $n>3$. This finishes the proof. 
\end{proof}
Now we are prepared for the proof of Theorem \ref{thm:non-regular rigidity}.

\begin{proof}[Proof of Theorem \ref{thm:non-regular rigidity}]
Let $G$ be a connected graph with non-negative Lin--Lu--Yau curvature and $k'(G)=\delta(G)-1$.
Let $E_0\subset E$ be a min-cut of $G$. Denote by $H_0$ the edge-induced subgraph of $E_0$ in $G$, and $V(G)=X_0\sqcup Y_0$ be the partition of $V(G)$ such that $E(X_0,Y_0)=E_0=E(H_0)$. Set $A_0 = X_0\cap V(H_0)$, $B_0 =Y_0\cap V(H_0)$. Denote by $n:=k'(G)$ for convenience. 

By Lemma \ref{lemma:admissible_graph_H}, we have 
\begin{equation}\label{eq:H0}
    H_0\in \begin{cases}
        \{K_{1,n}, nK_{1,1}\}, & \text{if } n=1,\ 2\ \text{or}\ n\geq5;\\
        \{K_{1,3}, 3K_{1,1}, ST_{1,1}\}, & \text{if } n=3;\\
        \{K_{1,4}, 4K_{1,1}, K_{2,2}, 2K_{1,2}\}, & \text{if } n=4.
    \end{cases}
\end{equation}
Note that, if $H_0=2K_{1,2}$, we have either $|A_0|=|B_0|=3$ or $|A_0|, |B_0|\in \{2,4\}$.
We will discuss the case that $H_0=2K_{1,2}$ with $|A_0|=|B_0|=3$ separately, and deal with the remaining cases first.  

\vspace{.2cm}

\textbf{Case 1:} Either $H_0\neq 2K_{1,2}$, or $H_0=2K_{1,2}$ with $|A_0|, |B_0|\in \{2,4\}$.

Set $V_0=A_0$ and $V_1=B_0$. Then we have $V_0\cap V_1=\emptyset$. By Lemma \ref{lemma:structure of G[]}, it holds that $G[V_0]=K_{|V_0|}$ and $G[V_1]=K_{|V_1|}$. Let us define for $i\geq 2, i\in \mathbb{Z}$ that
\[V_i:=\{y\in V(G)\setminus (V_{i-2}\cup V_{i-1}): S_1(y)\cap V_{i-1}\neq \emptyset\},\]
and for $i\leq -1, i\in \mathbb{Z}$ that
\[V_i:=\{y\in V(G)\setminus (V_{i+1}\cup V_{i+2}): S_1(y)\cap V_{i+1}\neq \emptyset\}.\]

\begin{claim}\label{claim:induction-structure without G_4^*}
    For any $i\in \mathbb{Z}$, the edge set $E(V_{i-1}, V_{i})$ is a min-cut of $G$ and the subgraph $G[V_{i-1},V_i]$ coincides with the edge-induced graph of $E(V_{i-1},V_i)$.
\end{claim}
\begin{proof}
We prove by induction. The claim holds true for $i=1$. For any $k\geq 1$, suppose it is true for $i\leq k$. Let $V(G)=X\sqcup Y$ be the partition of $V(G)$ such that $V_{k-1}\subset X$, $V_k\subset Y$, and $E(X,Y)=E(V_{k-1},V_k)$. Then the sets $X':=X\cup V_k$ and $Y':=Y\setminus V_k$ provide a new partition of $V(G)$ such that $E(X',Y')=E(V_k,V_{k+1})$. That is, $E(V_k,V_{k+1})$ is a cut. Next, we show that $|E(V_k,V_{k+1})|=n:=k'(G)$ and hence $E(V_k,V_{k+1})$ is a min-cut. Observe that 
\begin{equation}\label{eq:cut}
    |E(V_k,V_{k+1})|=\sum_{x\in V_k}\left(d_x-|S_1(x)\cap(V_{k-1}\cup V_k)|\right). 
\end{equation}
By the induction assumption, $E(V_{k-1},V_k)$ is a min-cut of $G$ and the subgraph $G[V_{k-1},V_k]$ coincides with the edge-induced graph of $E(V_{k-1},V_k)$. By Lemma \ref{lemma:structure of G[]}, we have $G[V_k]=K_{|V_k|}$, and for any $x\in V_k$ that
\[d_x=\begin{cases}
    2n, & \text{if }|V_k|=1;\\
    n+1, & \text{if }|V_k|>1.
\end{cases}\]

If $|V_k|=1$, then we deduce from \eqref{eq:H0} that $E(V_{k-1},V_k)=K_{1,n}$. Denote $V_k=\{u\}$. Then $|S_1(u)\cap(V_{k-1}\cup V_k)|=|S_1(u)\cap V_{k-1}|=n$. By \eqref{eq:cut}, this yields that $|E(V_k,V_{k+1})|=2n-n=n$. By the definition of $V_{k+1}$, the subgraph $G[V_{k},V_{k+1}]$ coincides with the edge-induced graph of $E(V_{k},V_{k+1})$.

If, otherwise, $|V_k|>1$, then we have by \eqref{eq:H0} that $E(V_{k-1},V_k)\in\{nK_{1,1}, ST_{1,1}, K_{2,2}, 2K_{1,2}\}$. If $E(V_{i-1},V_i)=nK_{1,1}$, then $G[V_k]=K_{|V_k|}=K_n$. Moreover, for each $x\in V_k$, \[|S_1(x)\cap(V_{k-1}\cup V_k)|=1+(n-1)=n.\] Applying \eqref{eq:cut}, we obtain $|E(V_k,V_{k+1})|=n\times[(n+1)-n]=n$. If $E(V_{i-1},V_i)=ST_{1,1}$, then $n=3$ and $G[V_k]=K_2$. Denoting $V_k=\{u_1,u_2\}$, we have $d_{u_1}=d_{u_2}=4$, and \[|S_1(u_1)\cap(V_{k-1}\cup V_k)|+|S_1(u_2)\cap(V_{k-1}\cup V_k)|=3+2=5.\] Hence, by \eqref{eq:cut}, $|E(V_k,V_{k+1})|=d_{u_1}+d_{u_2}-5=3=n$. If $E(V_{k-1},V_k)=K_{2,2}$, then $n=4$ and $G[V_k]=K_2$. For every $x\in V_k$, \[|S_1(x)\cap(V_{k-1}\cup V_k)|=1+2=3.\] Therefore, $|E(V_k,V_{k+1})|=2\times(5-3)=4=n$. Finally, suppose that $E(V_{k-1},V_k)=2K_{1,2}$, then $n=4$. We claim that that $|V_{k}|=2$ or $4$. When $k=1$, this follows directly from the assumption in \textbf{Case 1}. For $k>1$, if this were not true, then $|V_{k-1}|=|V_k|=3$. By Lemma \ref{lemma:structure of G[]}, it follows that $G[V_{k-1}]=K_3$ and each vertex $u\in V_{k-1}$ has degree $5$. Hence, $|E(V_{k-2},V_{k-1]})|=15-4-6=5>4$, contradicting the assumption that $E(V_{k-2}, V_{k-1})$ is a min-cut. 

If $|V_k|=2$, then $G[V_k]=K_2$ and for any $x\in V_k$, we have $|S_1(x)\cap(V_{k-1}\cup V_k)|=2+1=3$. If $|V_k|=4$, then $G[V_k]=K_4$ and for each $x\in V_k$, $|S_1(x)\cap(V_{k-1}\cup V_k)|=1+3=4$. Consequently, in either situation, $|E(V_k,V_{k+1})|=4=n$.

The above analysis shows that $S_1(x)\cap V_{k+1}\neq\emptyset$ for any $x\in V_k$. Combined with the definition of $V_{k+1}$, this implies that the subgraph $G[V_{k},V_{k+1}]$ coincides with the edge-induced graph of $E(V_{k-1},V_k)$. 

By the above induction argument, we show Claim \ref{claim:induction-structure without G_4^*} for any $i\geq 1$. The case $i\leq 1$ follows from a similar induction argument. This completes the proof. 
\end{proof}

By definition, for every $i\in \mathbb{Z}$, we have $V_i\cap V_{i+1}=\emptyset$. Moreover, since $E(V_i,V_{i+1})$ is a min-cut for each $i\in \mathbb{Z}$, it follows that $V_i\cap V_j=\emptyset$ and $E(V_i,V_j)=\emptyset$ whenever $|i-j|>1$. Because $G$ is connected, it follows that $V(G)=\sqcup_{i\in \mathbb{Z}}V_i$ with $E(V_i,V_j)=\emptyset$ whenever $|i-j|>1$.

We examine the following three subcases according to the value of $n=k'(G)$.

\textbf{Subcaes 1.1:} $n=1, 2$ or $n\geq5$.

By Lemma~\ref{lemma:structure of G[]}, Lemma~\ref{lemma:admissible_graph_H}, and Claim~\ref{claim:induction-structure without G_4^*}, for any $i\in \mathbb{Z}$, we have $G[V_i]\in\{K_1,K_n\}$ and $G[V_i,V_{i+1}]\in\{K_{1,n},nK_{1,1}\}$. Therefore, $G\in \mathcal{G}_n$.

\textbf{Subcaes 1.2:} $n=3$.

If $E(V_i,V_{i+1})\neq ST_{1,1}$ for any $i\in\mathbb{Z}$, then, by Lemma~\ref{lemma:structure of G[]}, Lemma~\ref{lemma:admissible_graph_H}, and Claim~\ref{claim:induction-structure without G_4^*}, we have $G[V_i]\in\{K_1,K_3\}$ and $G[V_i,V_{i+1}]\in\{K_{1,3},3K_{1,1}\}$ for any $i\in\mathbb{Z}$. Therefore, $G\in \mathcal{G}_3$.

If, otherwise, there exists an $i_0\in\mathbb{Z}$ such that $E(V_{i_0},V_{i_0+1})=ST_{1,1}$, then we claim that $E(V_{i-1}, V_{i})=ST_{1,1}$ for any $i\in \mathbb{Z}$. Without loss of generality, suppose $E(V_0,V_1)=ST_{1,1}$. We argue by induction. The claim holds true for $i=1$. Suppose it is true for $i=k\geq 1$. By Lemma~\ref{lemma:structure of G[]} and Lemma~\ref{lemma:admissible_graph_H}, we have $E(V_{k}, V_{k+1})\in\{K_{1,3},3K_{1,1},ST_{1,1}\}$. Since $|V_k|=2$, $E(V_k, V_{k+1})=ST_{1,1}$. This shows the claim for any $i\geq 1$. The case $i\leq 1$ follows by a similar induction argument.

Furthermore, applying Lemma~\ref{lemma:structure of G[]} tells that $G$ is $4-$regular and $G[V_i]=K_2$ for any $i\in \mathbb{Z}$. Therefore, $G=G_3^*$.

\textbf{Subcaes 1.3:} $n=4$.

By Lemma~\ref{lemma:structure of G[]}, Lemma~\ref{lemma:admissible_graph_H}, and Claim~\ref{claim:induction-structure without G_4^*}, we have $G[V_i]\in\{K_1,K_2,K_4\}$ for any $i\in\mathbb{Z}$, and $G[V_i,V_{i+1}]\in\{K_{1,4},4K_{1,1},K_{2,2},2K_{1,2}\}$ for any $i\in\mathbb{Z}$. Suppose that there exists $i\in \mathbb{Z}$ such that $(|V_{i-1}|,|V_i|,|V_{i+1}|)= (4,2,4)$, then the local structure of the graph is as depicted in Figure \ref{fig:negative_curvature}. It is direct to check by definition that the Lin--Lu--Yau curvature of any edge in $E(V_{i-1},V_i)\sqcup E(V_i,V_{i+1})$ is equal to $-0.2$. This contradicts to the assumption that $G$ has non-negative Lin--Lu--Yau curvature. Therefore, $G\in \Tilde{\mathcal{G}}_4$.

To conclude the proof for \textbf{Case 1}, we mention that all graphs in $\mathcal{G}_n, n\in \mathbb{Z}$ and $\Tilde{\mathcal{G}}_4$ and the graph $G_3^*$ have non-negative Lin--Lu--Yau curvature. See Appendix \ref{subsection:non-negative LLY curvature}.

\vspace{.2cm}

\textbf{Case 2:}  $H_0=2K_{1,2}$ with $|A_0|=|B_0|=3$.

In this case, we have $n=|H_0|=4$, $\delta(G)=n+1=5$. Let $A_0 = \{u_1, u_2, u_3\}$ and $B_0 = \{v_1, v_2, v_3\}$, where $u_1 \sim v_i$ and $u_{1+i} \sim v_3$ for $i = 1, 2$. By Lemma \ref{lemma:structure of G[]}, we have $G[A_0]=G[B_0]=K_3$, and $d_{u_i} = d_{v_i} =n+1= 5$ for $i = 1, 2, 3$. Therefore, $|S_1(u_1) \setminus (A_0 \cup B_0)|=|S_1(v_3) \setminus (A_0 \cup B_0)|=1$. Let $S_1(u_1) \setminus (A_0 \cup B_0)=\{u_4\}$ and $S_1(v_3) \setminus (A_0 \cup B_0)=\{v_4\}$. Set $V_0=A_0\cup\{u_4\}$ and $V_1=B_0\cup\{v_4\}$.

\begin{claim}\label{claim:G[V_0]=K_4 and d_u_4=5}
$G[V_0]=G[V_1]=K_4$ and $d_{u_4}=d_{v_4}=5$.
\end{claim}

\begin{proof}
Let us consider the Lin--Lu--Yau curvature of the edge $e_1:=u_1\sim v_1$. Notice that \[S_1(u_1) = \{u_2, u_3, u_4, v_1, v_2\}.\]
Observe that $|S_1(v_1) \setminus (A_0 \cup B_0)|=5-3=2$. Denote $S_1(v_1) \setminus (A_0 \cup B_0)=\{w_1, w_2\} $, see Figure~\ref{fig:add w_1 w_2}. Then
\[ 
\ S_1(v_1) = \{u_1, v_2, v_3, w_1, w_2\}.
\]
\begin{figure}[ht]  
    \centering  
    \includegraphics[width=0.5\textwidth]{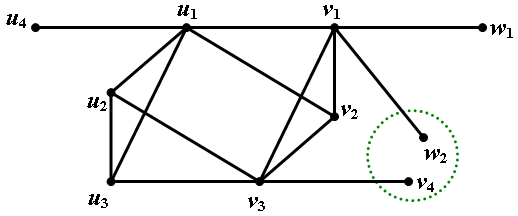}
    \caption{Local structure of $H_0$.}
    \label{fig:add w_1 w_2} 
\end{figure}

Recall from \eqref{eq:pi_phi} that there exists a bijective map $\phi:B_1(u_1)\longrightarrow B_1(v_1)$ such that  $\phi(u)=u$ for any $u\in B_1(u_1)\cap B_1(v_1)$ and 
\[ 6\cdot W(\mu_{u_1},\mu_{v_1}) = \sum_{u\in B_1(u_1)\setminus B_1(v_1)}d(u,\phi(u)).\]
If $v_3\not\sim w_1$ and $v_3\not\sim w_2$, then $d(\phi^{-1}(w_1),w_1)=d(\phi^{-1}(w_2),w_2)=3$. Thus,
\begin{equation*}
      6\cdot W(\mu_{u_1},\mu_{v_1})=d(\phi^{-1}(w_1))+d(\phi^{-1}(w_2),w_2)+d(\phi^{-1}(v_3),v_3)\geq 3+3+1 =7.
\end{equation*}
By (\ref{eq:K_LLY}), the above inequality implies that $\kappa_{LLY}(e_1)<0$, a contradiction. Therefore, we have $v_3\sim w_1$ or $v_3\sim w_2$. Without loss of generality, let $v_3\sim w_2$. Since $d_{v_3} = 5$, we have $v_4=w_2$ and hence $v_4\sim v_1$. By a similar argument for the edge $u_1\sim v_2$, we obtain $v_4\sim v_2$. Hence, $G[V_1]=K_4$.

Similarly, we derive $u_4 \sim u_2$, $u_4 \sim u_3$, and hence $G[V_0]=K_4$. Next, we show $d_{u_4}=d_{v_4}=5$. Since $\delta(G) = 5$, $d_{u_4}, d_{v_4} \geq 5$.
\begin{figure}[ht]  
    \centering  
    \includegraphics[width=0.5\textwidth]{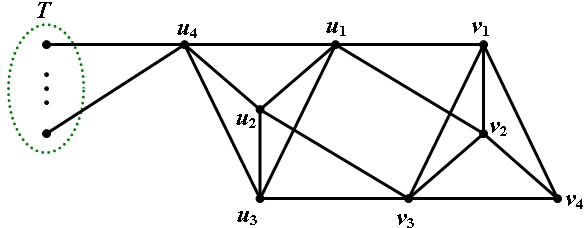}
    \caption{Local structure of $u_4$.}
    \label{fig:suppose degree of u_4 geq 6} 
\end{figure}
Suppose $d_{u_4} \geq 6$.
We consider the Lin--Lu--Yau curvature of the edge $e_* = u_1 \sim u_4$.  Set $T:=S_1(u_4) \setminus \{u_1, u_2, u_3\}$, see Figure~\ref{fig:suppose degree of u_4 geq 6}. By the Kantorovich duality \eqref{eq:Kantorovich}, we have
\begin{equation*}
  \begin{aligned}
    W\left(\mu_{u_1}^{\frac{1}{d_{u_1}+1}},\mu_{u_4}^{\frac{1}{d_{u_1}+1}}\right)
    \geq \sum_{v \in V} d(v,T) \left(\mu_{u_1}^{\frac{1}{d_{u_1}+1}}(v) - \mu_{u_4}^{\frac{1}{d_{u_1}+1}}(v)\right).
  \end{aligned}
\end{equation*}
Since $d(v,T)=0$ for any $v \in T$, we have
\begin{equation*}
  \begin{aligned}
    W\left(\mu_{u_1}^{\frac{1}{d_{u_1}+1}},\mu_{u_4}^{\frac{1}{d_{u_1}+1}}\right)\geq& \sum_{v \in \{u_1,u_2,u_3\}} d(v,T) \left(\mu_{u_1}^{\frac{1}{d_{u_1}+1}}(v)-\mu_{u_4}^{\frac{1}{d_{u_1}+1}}(v)\right)+\sum_{v \in \{v_1,v_2\}} d(v,T) \mu_{u_1}^{\frac{1}{d_{u_1}+1}}(v) \\
    \geq& 2\times \left(\frac{1}{6}-\frac{5}{36}\right)+2\times1\times\left(\frac{1}{6}-\frac{5}{36}\right)+2\times 3\times\frac{1}{6}>1.
  \end{aligned}
\end{equation*}
By (\ref{eq:K_LLY}), the above inequality implies that $\kappa_{LLY}(e_*) < 0$, a contradiction. Thus, we derive $d_{u_4} = 5$. Similarly, we have $d_{v_4} = 5$.

\end{proof}

Let us define, for $i\geq 2, i\in \mathbb{Z}$, that $V_i:=R_i^1\sqcup R_i^2$, where
\[R_i^1:=\{y\in V(G)\setminus (V_{i-2}\cup V_{i-1}): S_1(y)\cap V_{i-1}\neq \emptyset\},\ \ R_i^2:=\{y\in V(G)\setminus V_{i-1}: R_i^1\subset S_1(y)\};\]
and for $i\leq -1, i\in \mathbb{Z}$, that $V_i=L_i^1\sqcup L_i^2$, where
\[L_i^1:=\{y\in V(G)\setminus (V_{i+1}\cup V_{i+2}): S_1(y)\cap V_{i+1}\neq \emptyset\},\ \ L_i^2:=\{y\in V(G)\setminus V_{i+1}: L_i^1\subset S_1(y)\}.\]
Observe from the above definition that $V_i\cap V_{i+1}=\emptyset$ for any $i\in \mathbb{Z}$.
For any $i\in \mathbb{Z}$, we denote by $H_i$ the edge-induced subgraph of $E(V_{i},V_{i+1})$ in $G$. Set $A_i:=V_{i}\cap V(H_i)$ and $B_i:=V_{i+1}\cap V(H_i)$.

Next, we show for each $i\in\mathbb{Z}$, the following properties hold true:
\begin{enumerate}
\item $E(V_{i-1}, V_{i})$ is a min-cut of $G$, with $H_{i-1}=2K_{1,2}$ and $|A_{i-1}|=|B_{i-1}|=3$;
\item $d_x=5$ for any $x\in V_{i-1}\cup V_i$;
\item $G[V_i]=K_4$.
\end{enumerate}

We again argue by induction. By Claim~\ref{claim:G[V_0]=K_4 and d_u_4=5}, The above properties hold true for $i=1$. Suppose they are true for $i=k\geq 1$. Let $V(G)=X\sqcup Y$ be the partition of $V(G)$ such that $V_{k-1}\subset X$, $V_k\subset Y$, and $E(X,Y)=E(V_{k-1},V_k)$. Since $E(V_{k-1},V_k)$ is a cut of $G$, we have $V_{k+1}\cap X=\emptyset$. Then the sets $X':=X\cup V_k$ and $Y':=Y\setminus V_k$ provide a new partition of $V(G)$ such that $E(X',Y')=E(V_k,V_{k+1})$. Thus, $E(V_k,V_{k+1})$ is a cut. Next, we show it is a min-cut.

Let $V_j:=\{u_j^1,u_j^2,u_j^3,u_j^4\}$ for $j=k-1, k$. Without loss of generality, we assume that \[u_{k-1}^1 \sim u_k^j,\text{ and}\ u_{k-1}^{1+j} \sim u_k^3,\,\ \text{for}\ j = 1, 2.\] By the induction assumption, $d_x=5$ for any $x\in V_k$. Set $S_1(u_k^4)\setminus\{u_k^1,u_k^2,u_k^3\}:=\{u_{k+1}^1,u_{k+1}^2\}$, $S_1(u_k^2)\setminus\{u_{k-1}^1,u_k^1,u_k^3,u_k^4\}:=\{u_{k+1}^3\}$, and $S_1(u_k^1)\setminus\{u_{k-1}^1,u_k^2,u_k^3,u_k^4\}:=\{\Tilde{u}_{k+1}^3\}$. Notice further that all neighbors of $u_k^3$ lie in $V_{k-1}\sqcup V_k$.
Thus, $|E(V_k,V_{k+1})|=4$ and $E(V_k,V_{k+1})$ is a min-cut. 
Recall that $A_{k}=V(H_{k})\cap V_k=\{u_k^1,u_k^2,u_k^4\}$. Then, by Lemma \ref{lemma:admissible_graph_H}, we have $H_{k}=2K_{1,2}$ with $|A_{k}|=|B_{k}|=3$. This proves the property $(1)$ holds for $i=k+1$.

Moreover, we conclude that $u_{k+1}^3=\Tilde{u}_{k+1}^3$. Thus, $B_{k}=V(H_{k})\cap V_{k+1}=\{u_{k+1}^1,u_{k+1}^2,u_{k+1}^3\}$. By Lemma \ref{lemma:structure of G[]}, we have $G[B_{k}]=K_3$, and $d_{u_{k+1}^j} = 5$ for $j = 1, 2, 3$. Let $S_1(u_{k+1}^3) \setminus \{u_{k}^1,u_{k}^2,u_{k+1}^1,u_{k+1}^2\}=\{u_{k+1}^4\}$. By Claim~\ref{claim:G[V_0]=K_4 and d_u_4=5}, we have $G[\{u_{k+1}^1,u_{k+1}^2,u_{k+1}^3,u_{k+1}^4\}]=K_4$ and $d_{u_{k+1}^4}=5$. According to the definition of $V_{k+1}$, we have $V_{k+1}=\{u_{k+1}^1,u_{k+1}^2,u_{k+1}^3,u_{k+1}^4\}$. Thus, $G[V_{k+1}]=K_4$, and $d_x=5$ for any $x\in V_{k}\cup V_{k+1}$. That is, the properties $(2)$ and $(3)$ hold for $i=k+1$. By our induction argument, we show the properties $(1)-(3)$ hold for $i\geq 1$. The case $i\leq 1$ follows by a similar induction argument.

Recall that $V_i\cap V_{i+1}=\emptyset$ for any $i\in \mathbb{Z}$. Since $E(V_i,V_{i+1})$ is a min-cut for each $i\in \mathbb{Z}$, we observe that $V_i\cap V_j=\emptyset$ and $E(V_i, V_j)=\emptyset$ whenever $|i-j|>1$. Since $G$ is connected, we have $V(G)=\sqcup_{i\in \mathbb{Z}}V_i$.
Therefore, $G=G_4^*$. Moreover, notice that the graph $G_4^*$ has non-negative Lin--Lu--Yau curvature, see Appendix \ref{subsection:non-negative LLY curvature}.

Combining the above discussions in \textbf{Case 1} and \textbf{Case 2}, we finish the proof of Theorem \ref{thm:non-regular rigidity}.
\end{proof}

\appendix
\section{The Lin--Lu--Yau curvature of graphs listed in Theorem \ref{thm:non-regular rigidity}}\label{subsection:non-negative LLY curvature}
In this Appendix, we present the Lin--Lu--Yau curvature values for edges of graphs $G$ in \[\left(\cup_{n\in\mathbb{Z}_{>0}}\mathcal{G}_n\right)\cup\Tilde{\mathcal{G}}_4\cup\{G_3^*,G_4^*\}.\]
The Lin--Lu--Yau curvature of an edge is determined using \eqref{eq:K_LLY}. Notice that the Wasserstein distance $W$ can be obtained by using  Definition \ref{definition:Wasserstein} together with the Kantorovich duality formula \eqref{eq:Kantorovich}.
More precisely, Definition \ref{definition:Wasserstein} provides an upper bound for $W$, while the Kantorovich duality \eqref{eq:Kantorovich} gives a lower bound. When these two bounds agree, the exact value of $W$ follows. One may also use the \emph{graph curvature calculator} (https://www.mas.ncl.ac.uk/graph-curvature/) to compute Lin--Lu--Yau curvature values, see \cite{Cush22}. 

\textbf{Case 1:} $G=G_3^*$. Let $V(G)=\sqcup_{i\in\mathbb{Z}}V_i$ be the partition in Definition \ref{def:The graphs G_3^*}. Then an edge $e$ of $G$ lies either in $G[V_i]$ or in $E(V_i, V_{i+1})$ for some integer $i$. If $e\in E(G[V_i])$, then $\kappa_{LLY}(e)=0.5$, see Figure~\ref{fig:G_3^ e in E(G[V_i])}. If, otherwise, $e\in E(V_i, V_{i+1})$, then we have $\kappa_{LLY}(e)\in\{0,0.5\}$, see Figure~\ref{fig:G_3^ e in E(V_i,V_{i+1})}. 
\begin{figure}[H]
    \begin{minipage}{7.4cm}
    \centering
    \includegraphics[height=3cm,width=6cm]{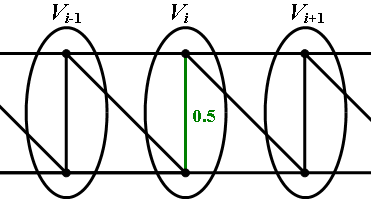}
    \caption{$\kappa_{LLY}(e)=0.5$}
    \label{fig:G_3^ e in E(G[V_i])}
    \end{minipage}
    \begin{minipage}{7.4cm}
    \centering
    \includegraphics[height=3cm,width=4cm]{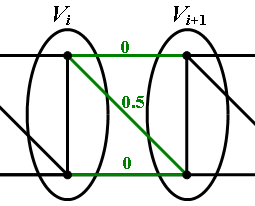}
    \caption{$\kappa_{LLY}(e)\in\{0,0.5\}$}
    \label{fig:G_3^ e in E(V_i,V_{i+1})}
    \end{minipage}
\end{figure}

\textbf{Case 2:} $G=G_4^*$. Let $V(G)=\sqcup_{i\in\mathbb{Z}}V_i$ be the partition in Definition \ref{def:The graphs G_4^*}. Then an edge $e$ of $G$ lies either in $G[V_i]$ or in $E(V_i, V_{i+1})$ for some integer $i$. If $e\in E(G[V_i])$, then $\kappa_{LLY}(e)\in\{0,0.4,1.2\}$, see Figure~\ref{fig:G_4^ e in E(G[V_i])}. If, otherwise, $e\in E(V_i, V_{i+1})$, then we have $\kappa_{LLY}(e)=0$, see Figure~\ref{fig:G_4^ e in E(V_i,V_{i+1})}.
\begin{figure}[H]
    \begin{minipage}{8cm}
    \centering
    \includegraphics[height=3.5cm,width=7.5cm]{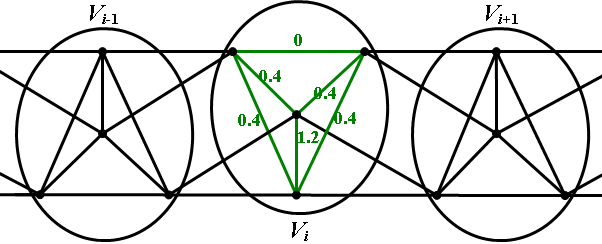}
    \caption{$\kappa_{LLY}(e)\in\{0, 0.4, 1.2\}$}
    \label{fig:G_4^ e in E(G[V_i])}
    \end{minipage}
    \begin{minipage}{7cm}
    \centering
    \includegraphics[height=3.5cm,width=4.9cm]{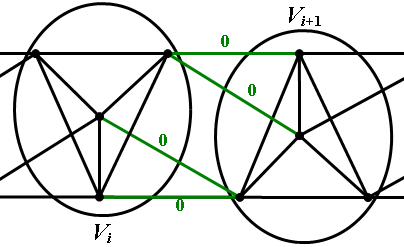}
    \caption{$\kappa_{LLY}(e)=0$}
    \label{fig:G_4^ e in E(V_i,V_{i+1})}
    \end{minipage}
\end{figure}
\textbf{Case 3:} $G\in\mathcal{G}_n$ for some positive integer $n$. Let $V(G)=\sqcup_{i\in\mathbb{Z}}V_i$ be the partition in Definition \ref{def:The graph class mathcal G_n}. Then an edge $e$ of $G$ lies either in $G[V_i]$ or in $E(V_i, V_{i+1})$ for some integer $i$. If $e\in E(G[V_i])$, then $G[V_i]$ is not $K_1$ but $K_n$, and the local structure around $e$ can only be one of the following three subcases: (i) $G[V_{i-1}]=G[V_{i+1}]=K_1$; (ii) $\{G[V_{i-1}],G[V_{i+1}]\}=\{K_1,K_n\}$; (iii) $G[V_{i-1}]=G[V_{i+1}]=K_n$; see Figure~\ref{fig:G_n two point e in E(G[V_i])},~\ref{fig:G_n one point e in E(G[V_i])}, and~\ref{fig:G_n none point e in E(G[V_i])}. The curvature $\kappa_{LLY}(e)$ equals $\frac{n+2}{n+1}$, $1$, $\frac{n}{n+1}$, respectively.
\begin{figure}[H]
    \begin{minipage}{7.1cm}
    \centering
    \includegraphics[height=3.5cm,width=7.1cm]{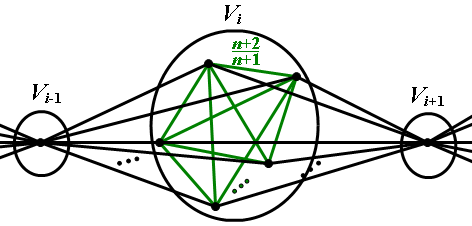}
    \caption{$\kappa_{LLY}(e)=\frac{n+2}{n+1}$}
    \label{fig:G_n two point e in E(G[V_i])}
    \end{minipage}
    \begin{minipage}{7.1cm}
    \centering
    \includegraphics[height=3.5cm,width=7.1cm]{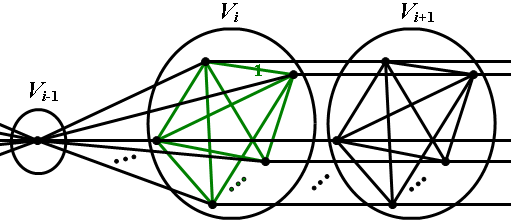}
    \caption{$\kappa_{LLY}(e)=1$}
    \label{fig:G_n one point e in E(G[V_i])}
    \end{minipage}
\end{figure}
\begin{figure}[H]
    \begin{minipage}{8cm}
    \centering
    \includegraphics[height=3.5cm,width=8cm]{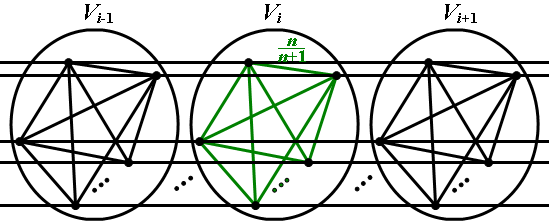}
    \caption{$\kappa_{LLY}(e)=\frac{n}{n+1}$}
    \label{fig:G_n none point e in E(G[V_i])}
    \end{minipage}
\end{figure}
If, otherwise, $e\in E(V_i, V_{i+1})$, then we have $\kappa_{LLY}(e)=0$. In fact, the edge-induced subgraph of $E(V_i, V_{i+1})$ is isomorphic to either $K_{1,n}$ or $nK_{1,1}$, see Figure~\ref{fig:G_n one point e in E(V_i, V_{i+1})} and~\ref{fig:G_n none point e in E(V_i, V_{i+1})}.
\begin{figure}[H]
    \begin{minipage}{7.1cm}
    \centering
    \includegraphics[height=3.5cm,width=5cm]{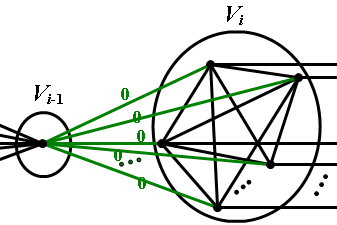}
    \caption{$\kappa_{LLY}(e)=0$}
    \label{fig:G_n one point e in E(V_i, V_{i+1})}
    \end{minipage}
    \begin{minipage}{7.1cm}
    \centering
    \includegraphics[height=3.5cm,width=5.5cm]{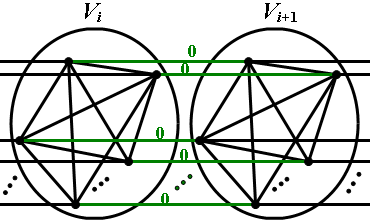}
    \caption{$\kappa_{LLY}(e)=0$}
    \label{fig:G_n none point e in E(V_i, V_{i+1})}
    \end{minipage}
\end{figure}
\textbf{Case 4:} $G\in\Tilde{\mathcal{G}}_4$. Let $V(G)=\sqcup_{i\in\mathbb{Z}}V_i$ be the partition in Definition \ref{def:The graph set Tilde mathcal G_4}. Then an edge $e$ of $G$ lies either in $G[V_i]$ or in $E(V_i, V_{i+1})$ for some integer $i$. If $e\in E(G[V_i])$, then $G[V_i]\in\{K_2,K_4\}$. If $G[V_i]=K_4$, then the local structure near $e$ can only be one of the following six subcases:
\begin{multicols}{2} 
\begin{enumerate}
\item $G[V_{i-1}]=G[V_{i+1}]=K_1$;
\item $G[V_{i-1}]=G[V_{i+1}]=K_2$;
\item $G[V_{i-1}]=G[V_{i+1}]=K_4$;
\item $\{G[V_{i-1}],G[V_{i+1}]\}=\{K_1,K_2\}$;
\item $\{G[V_{i-1}],G[V_{i+1}]\}=\{K_1,K_4\}$;
\item $\{G[V_{i-1}],G[V_{i+1}]\}=\{K_2,K_4\}$.
\end{enumerate}    
\end{multicols}
The curvature value of $e$ is depicted in Figure \ref{fig:K_4 with two point e in E(G[V_i])}--\ref{fig:K_4 with K_2 K_4 e in E(G[V_i])}.

\begin{figure}[H]
    \begin{minipage}{7.1cm}
    \centering
    \includegraphics[height=3.5cm,width=6.5cm]{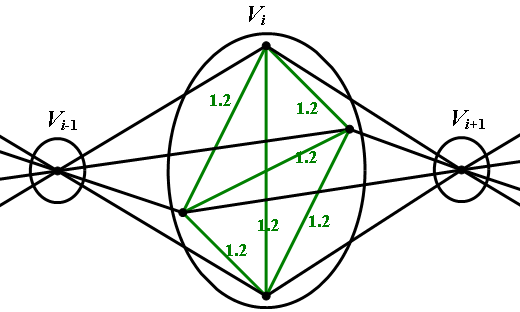}
    \caption{$\kappa_{LLY}(e)=1.2$}
    \label{fig:K_4 with two point e in E(G[V_i])}
    \end{minipage}
    \begin{minipage}{7.8cm}
    \centering
    \includegraphics[height=3.5cm,width=5.3cm]{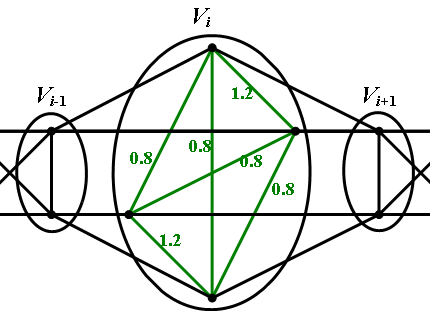}
    \caption{$\kappa_{LLY}(e)\in\{0.8,1.2\}$}
    \label{fig:K_4 with two K_2 e in E(G[V_i])}
    \end{minipage}
\end{figure}

\begin{figure}[H]
    \begin{minipage}{7cm}
    \centering
    \includegraphics[height=2.9cm,width=6.65cm]{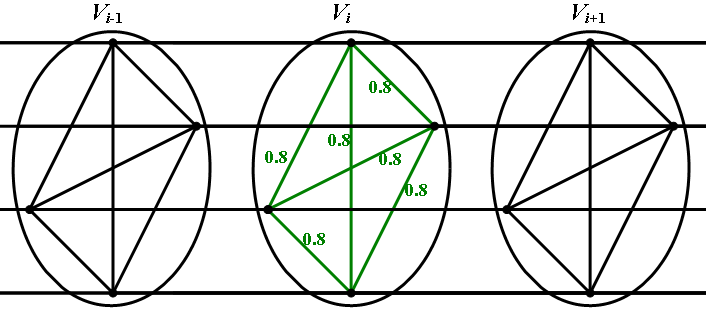}
    \caption{$\kappa_{LLY}(e)=0.8$}
    \label{fig:K_4 with two K_4 e in E(G[V_i])}
    \end{minipage}
    \begin{minipage}{7.5cm}
    \centering
    \includegraphics[height=2.9cm,width=4.4cm]{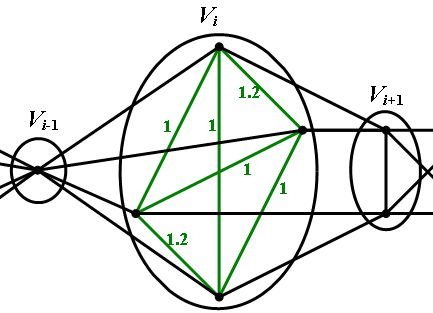}
    \caption{$\kappa_{LLY}(e)\in\{1,1.2\}$}
    \label{fig:K_4 with K_1 K_2 e in E(G[V_i])}
    \end{minipage}
\end{figure}

\begin{figure}[H]
    \begin{minipage}{7.1cm}
    \centering
    \includegraphics[height=3.5cm,width=6.5cm]{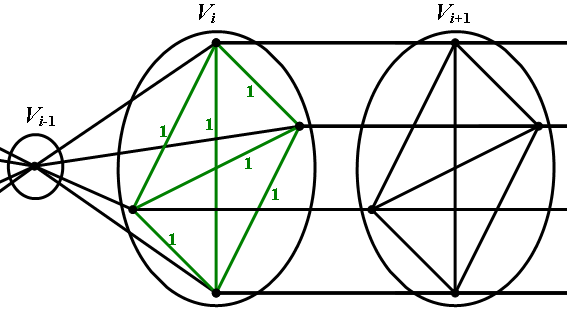}
    \caption{$\kappa_{LLY}(e)=1$}
    \label{fig:K_4 with K_1 K_4 e in E(G[V_i])}
    \end{minipage}
    \begin{minipage}{7.4cm}
    \centering
    \includegraphics[height=3.5cm,width=6.5cm]{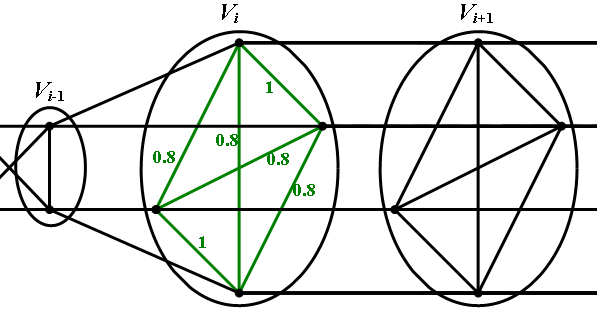}
    \caption{$\kappa_{LLY}(e)\in\{0.8,1\}$}
    \label{fig:K_4 with K_2 K_4 e in E(G[V_i])}
    \end{minipage}
\end{figure}

If $G[V_i]=K_2$, then the local structure around $e$ can only be one of the following two subcases: (a) $G[V_{i-1}]=G[V_{i+1}]=K_2$; (b) $\{G[V_{i-1}],G[V_{i+1}]\}=\{K_2,K_4\}$.
See Figures \ref{fig:K_2 with two K_2 e in E(G[V_i])} and Figure \ref{fig:K_2 with one K_2 e in E(G[V_i])} for the curvature values.

\begin{figure}[H]
    \begin{minipage}{7cm}
    \centering
    \includegraphics[height=1.7cm,width=3.6cm]{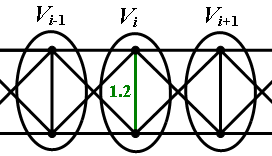}
    \caption{$\kappa_{LLY}(e)=1.2$}
    \label{fig:K_2 with two K_2 e in E(G[V_i])}
    \end{minipage}
    \begin{minipage}{7cm}
    \centering
    \includegraphics[height=3.5cm,width=5.2cm]{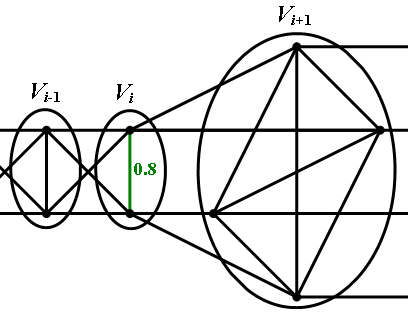}
    \caption{$\kappa_{LLY}(e)=0.8$}
    \label{fig:K_2 with one K_2 e in E(G[V_i])}
    \end{minipage}
\end{figure}
If $e\in E(V_i, V_{i+1})$, then we have $\kappa_{LLY}(e)=0$. In fact, the edge-induced subgraph of $E(V_i, V_{i+1})$ is isomorphic to one of $K_{1,4}$, $4K_{1,1}
$, and $2K_{1,2}$, see Figure~\ref{fig:K_1,4 e in E(V_i, V_{i+1})},~\ref{fig:4K_1,1 e in E(V_i, V_{i+1})}, and~\ref{fig:2K_1,2 e in E(V_i, V_{i+1})}. Notice that in Figure \ref{fig:2K_1,2 e in E(V_i, V_{i+1})}, the induced subgraph $G[V_{i-1}]$ can only be $K_2$ due to Condition (iv) in Definition~\ref{def:The graph set Tilde mathcal G_4}.
\begin{figure}[H]
    \begin{minipage}{4.9cm}
    \centering
    \includegraphics[height=3cm,width=4cm]{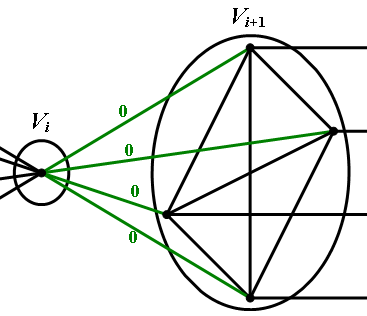}
    \caption{$\kappa_{LLY}(e)\text{ = 0}$}
    \label{fig:K_1,4 e in E(V_i, V_{i+1})}
    \end{minipage}
    \begin{minipage}{5cm}
    \centering
    \includegraphics[height=3cm,width=5cm]{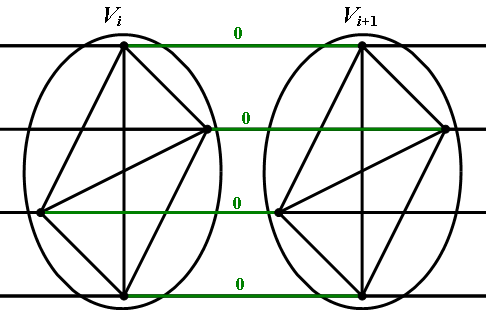}
    \caption{$\kappa_{LLY}(e)\text{ = 0}$}
    \label{fig:4K_1,1 e in E(V_i, V_{i+1})}
    \end{minipage}
    \begin{minipage}{4.9cm}
    \centering
    \includegraphics[height=3cm,width=4cm]{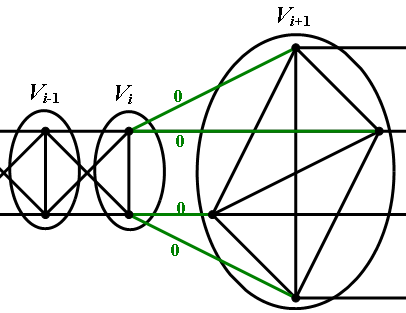}
    \caption{$\kappa_{LLY}(e)\text{ = 0}$}
    \label{fig:2K_1,2 e in E(V_i, V_{i+1})}
    \end{minipage}
\end{figure}
\section*{Acknowledgements}
This work is supported by the National Key R \& D Program of China 2023YFA1010200 and the National Natural Science Foundation of China No. 12431004. We are very grateful to Kaizhe Chen for many useful discussions, especially for providing one of the sharp graphs, $G_3^*$.


\end{document}